\documentclass[a4paper, 10pt]{article}
\hyphenpenalty=8000
\textwidth=125mm
\textheight=190mm

\usepackage{graphicx}

\usepackage{alltt}

\usepackage{amsmath}
\usepackage{epsfig}
\usepackage{graphicx,subfigure,amssymb,amsfonts,latexsym,amsmath,amsthm,enumerate,float}
\usepackage[colorlinks,breaklinks, bookmarks=true]{hyperref}
\hypersetup{colorlinks = true, linkcolor = blue, anchorcolor = blue, citecolor = blue, filecolor = red, urlcolor = red}
\usepackage{hypcap}
\setlength{\parindent}{0.0em}
\usepackage{parskip}
\usepackage[table]{xcolor}
\definecolor{lightgray}{gray}{0.9}
\usepackage{booktabs,multirow,array}
\usepackage{lineno}
\newtheorem{theorem}{Theorem}

\newtheorem{proposition}[theorem]{Proposition}

\usepackage[raggedright]{titlesec}
\usepackage{lipsum}
\newcommand{\affilmark}[1]{\rlap{\textsuperscript{\itshape#1}}}


\pagenumbering{arabic}
\setcounter{page}{1}


\begin{document}

\begin{center}

\LARGE
\textbf{Study of LG-Holling type III predator-prey model with disease in predator}\\[6pt] 
\small
\textbf {Absos Ali Shaikh\footnote{Corresponding author}\affilmark{1}, Harekrishna Das\affilmark{1}, Nijamuddin Ali\affilmark{2}}\\[6pt]
\textsuperscript{1}\ Department of Mathematics, University of Burdwan, \\ Burdwan-713104, West Bengal, India \\ aask2003@yahoo.co.in, hkdasm74@gmail.com\\[6pt]

\textsuperscript{2}\ Department of Mathematics, Katwa College (B.Ed. Section), \\
Burdwan-713130, West Bengal, India.\\
nijamuddin.math@gmail.com\\[6pt]

\end{center}

\begin{abstract}
In this article, a Leslie-Gower Holling type III  predator-prey model with disease in predator has been developed from both biological and mathematical point of view. The total population is divided into three classes, namely, prey, susceptible predator and infected predator. The local stability, global stability together with sufficient conditions for persistence of the ecosystem  near biologically feasible equilibria is thoroughly investigated. Boundedness and existence of the system  are established. All the important analytical findings  are numerically verified using program software MATLAB and Maple. \vskip 2mm

\textbf{Keywords:} Eco-epidemic model, Intra-specific competition, Local and global stability, Lyapunov function, Persistence.

\end{abstract}

\section{Introduction}\label{sec:1}
The predators and the preys carry a dynamic relationship among themselves. And for its universal existence and importance, this relationship is one of the dominant themes in theoretical ecology. Mathematical modelling is considered to be very useful tool to understand and analyze the dynamic behavior of predator-prey systems. Predator functional response on prey population is the major element in predator-prey interaction. It describes the number of prey consumed per predator per unit time for given quantities of prey and predator. The most important and useful functional responses are Lotka-Volterra functional responses such as Holling type I functional response, Holling type II functional response and three species population models with such functional responses are widely researched in ecological literature \cite{pielou1974population}, \cite{murray2001mathematical}, \cite{korobeinikov2005non}, \cite{holling1965functional}. There are also many research works on three species systems like two preys one predator \cite{ali2015stability}, \cite{klebanoff1994chaos}, \cite{gakkhar2003existence}, \cite{el2007chaos}, tritrophic food chain \cite{aziz2002study}, \cite{haque2013study}, \cite{ali2017dynamics} etc.\\
\indent The Mathematical modelling of epidemics has become a very important subject of research after the seminal model of Kermack-MacKendrick (1927) on SIRS (susceptible-infected-removed-susceptible)  systems. It describes the evolution of a disease which gets transmitted upon contact. Important studies have been carried out with the aim of controlling the effects of diseases and of developing suitable vaccination strategies \cite{Anderson1991infectious}, \cite{Li199Global}, \cite{Anderson1982population}. Eco-epidemic research describes  disease that spread among interacting populations, where the epidemic and demographic aspects are merged within one model. During the last decade, this branch of science is developing and studied by the authors in  \cite{Anderson1991infectious}, \cite{Hadeler1989Predator}, \cite{Venturino1994The}. In the natural world, species do not exist alone. It is of more biological significance to study persistence-extinction threshold of each population in systems of two or more interacting species subjected to parasitism. In mathematical biology the predator prey systems and models for transmissible  disease are major field of study in their own right. In the growing ecoepidemic literature and from early papers \cite{Freedman1990Amodel}, disease mainly spreading in the prey are examined in \cite{Xiao2001Modelling}, \cite{Chattapadhyay1999Apredator}, \cite{Chattapadhyay2003Classical}, but in \cite{Venturino2002Epidemics}, \cite{haque2006increase}, \cite{haque2007anecoepi}, the epidemics are  assumed to affect the predators. The predator-prey model with  modified Lesli-Gower Holling type II Scheme was introduced in \cite{Aziz2003Boundedness}, \cite{Guo2008Animpulsive}, \cite{Song2008Dynamic}. The LG model with Holling type II response function with disease in predator is discussed in \cite{Sahabuddin2011Global}. But no one pay the attention for the modified LG model with Holling type III response function for predation with disease in predator.\\
\indent Here we make an attempt to study the above said model with Holling type III response for predation and intra-specific competition among predators. The rest of the article is as follows. In Section $2$, we explain the formulation of the model under consideration and its assumptions. Section $3$ contains some preliminary results. In Section $4$, we  analyze the system behavior of the  trivial equilibria. Also the model with intra specific competition is analyzed for the system behavior around axial and boundary equilibria in Section $4$ . In Section $5$, local and global stability of the  interior equilibria is analyzed. Section $6$ contains persistence of the system. Numerical simulation has been carried out in Section $7$  to support our analytical findings. The article comes to an end with a discussion of the results obtained in Section $8$.

\section{{Mathematical model formulation}}
We make the following assumptions:
\begin{itemize}
\item The disease spreads only among the predators. Let $y$ denotes the susceptible
predators and $z$ the infected ones. The total predator population is $n(t) = y(t) + z(t)$.
\item The disease spreads with a simple mass action law (with the disease incidence $\theta > 0$).
The prey population $x$ grows logistically with intrinsic growth
rate $a_1 > 0$ and carrying capacity $a_1/b_1$ in the absence of predator population. 
\item We introduce intra-specific
competition among the predator's sound and infected sub-populations.
\item Holling type-III response mechanism is considered for predation.
\end{itemize}
%

According to the above assumptions, we get the following model with non negative parameters
\begin{subequations}\label{model-1}
\begin{eqnarray}
&&\frac{dx}{dt}=a_1x-b_1x^2-\frac{c_1x^2y}{k_1+x^2}-\frac{pc_1x^2z}{k_1+x^2}=f_1(x,y,z), \label{model-4-eq1}\\
&&\frac{dy}{dt}=a_2y-\frac{c_2y(y+z)}{k_2+x}-\theta yz =f_2(x,y,z), \label{model-4-eq2}\\
&&\frac{dz}{dt}=\theta yz+a_3z-\frac{c_3z(y+z)}{k_2+x} =f_3(x,y,z), \label{model-4-eq3}\\
&& x(0)\geq 0,\quad y(0)\geq 0,\quad z(0)\geq 0,\nonumber
\end{eqnarray}
\end{subequations}
where $a_2 ,a_3 (a_2\geq a_3) $ are the per capita growth rates of each predator sub population. Thus from sick parents, the disease can be transmitted to their offspring. The parameter $k_1$ represents the half saturation constant of the prey and $k_2$ is the measure of alternative food. Hence the Jacobian matrix of the system (\ref{model-1}) is $J=(m_{ij})\in \mathbb{R}^{3\times 3}$ with entries\\
$m_{11}=a_{1}-2b_{1}x-\frac{2c_{1}xy}{x^2+k1}+\frac{2c_{1}x^3y}{(x^2+k_{1})^2}-\frac{2pc_{1}xz}{x^2+k_{1}}+\frac{2pc_{1}x^3z}{(x^2+k_{1})^2},
\qquad m_{12}=-\frac{c_{1}x^2}{x^2+k_{1}},\qquad m_{13}\\=-\frac{pc_{1}x^2}{x^2+k_{1}},\qquad m_{21}=\frac{c_{2}y(y+z)}{(x+k_{2})^2},\qquad m_{22}=a_{2} -\frac{c_{2}(2y+z)}{x+k_{2}}-\theta z,\qquad m_{23}=-\frac{c_{2}y}{x+k_{2}}-\theta y,\qquad m_{31}=\frac{c_{3}z(y+z)}{(x+k_2)^2},\qquad m_{32}=\theta z-\frac{c_{3}z}{x+k_{2}},\qquad m_{33}=\theta y+a_{3}-\frac{c_{3}(y+2z)}{x+k_{2}}$.
\begin{table}[H]
  \centering
 \caption{The set of model parameters and variables, dimension and their biological description. } 
  \scalebox{0.8}{
  \setlength{\tabcolsep}{20pt}
\begin{tabular}{|c|c|l|}
\hline \parbox[t]{0.7in}{Variable or Parameter} & \parbox[t]{0.7in}{Unit or Dimension} & \parbox[t]{2.5in}{Description} \\ 
\hline $x$ & $V$ & Prey density \\ 
\hline $y$ & $V$ & \parbox[t]{2.5in}{Density of susceptible predator} \\ 
\hline $z$ & $V$ & \parbox[t]{2.5in}{Density of infected predator} \\ 
\hline $a_1$ & $T^{-1}$ & \parbox[t]{2.5in}{Intrinsic growth rate of prey} \\ 
\hline $a_2$ & $T^{-1}$ & \parbox[t]{2.7in}{Intrinsic growth rate of susceptible predator} \\ 
\hline $a_3$ & $T^{-1}$ & \parbox[t]{2.5in}{Intrinsic growth rate of infected predator} \\ 
\hline $b_1$ & $V^{-1}T^{-1}$ & \parbox[t]{2.5in}{Intra-specific competition rate of prey} \\ 
\hline $c_1$ & $T^{-1}$ & \parbox[t]{2.5in}{Predation rate of susceptible predator} \\ 
\hline $c_2$ & $T^{-1}$ & \parbox[t]{2.5in}{Death rate due to intra-specific competition of susceptible predator} \\ 
\hline $c_3$ & $T^{-1}$ & \parbox[t]{2.5in}{Death rate due to intra-specific competition of infected predator} \\ 
\hline $\theta$ & $V^{-1}T^{-1}$ & \parbox[t]{2.5in}{Disease incidence rate }\\ 
\hline $k_1$ & $V^2$ & \parbox[t]{2.5in}{Half saturation constant of the prey} \\ 
\hline $k_2$ & $V$ & \parbox[t]{2.5in}{Measure of alternative food} \\ 
\hline $p$ & $Dimensionless$ & \parbox[t]{2.5in}{Constant lies between 0 to 1} \\
\hline 
\end{tabular}}\\
\end{table}
\section{Preliminary results}\label{PR}
\subsection{Existence}
\begin{theorem}  Every solution of the system (\ref{model-1}) with initial conditions exists  in the interval $(0,+\infty)$ and $x(t)\geq 0$, $y(t)\geq 0$, $z(t)\geq 0$ for all $t\geq 0$.
\end{theorem}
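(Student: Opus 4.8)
The plan is to show that the vector field on the right-hand side of \eqref{model-1} is locally Lipschitz, so that the standard Picard--Lindel\"of theorem gives a unique local solution on some maximal interval $[0,T_{\max})$; then to establish positivity of that solution; and finally to rule out finite-time blow-up, forcing $T_{\max}=+\infty$.

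First I would note that $f_1,f_2,f_3$ are rational functions of $(x,y,z)$ whose denominators, $k_1+x^2$ and $k_2+x$, are bounded away from zero on the region $x\ge 0$ (indeed on any set where $x>-k_2$), hence each $f_i\in C^1$ there and the system has a unique solution on a maximal forward interval $[0,T_{\max})$. For positivity I would use the fact that each equation has the form $\dot u = u\,g_i(x,y,z)$ for a continuous $g_i$: explicitly
\[
x(t)=x(0)\exp\!\Big(\!\int_0^t\!\big(a_1-b_1x-\tfrac{c_1x(y+pz)}{k_1+x^2}\big)\,ds\Big),\quad
y(t)=y(0)\exp\!\Big(\!\int_0^t\!\big(a_2-\tfrac{c_2(y+z)}{k_2+x}-\theta z\big)\,ds\Big),
\]
and similarly $z(t)=z(0)\exp(\int_0^t(\theta y+a_3-\tfrac{c_3(y+z)}{k_2+x})\,ds)$, each of which is manifestly nonnegative for $t\in[0,T_{\max})$ whenever the initial data are nonnegative (and the integrands are well defined since $x\ge 0$ keeps the denominators positive).

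The remaining step is the a priori bound. From $\dot x\le a_1x-b_1x^2$ and a comparison argument, $x(t)\le\max\{x(0),a_1/b_1\}=:M_x$ for all $t\in[0,T_{\max})$. Then consider the total predator $n=y+z$; adding \eqref{model-4-eq2} and \eqref{model-4-eq3} and using $a_2\ge a_3$ gives $\dot n\le a_2 n-\tfrac{(c_2y+c_3z)(y+z)}{k_2+x}\le a_2 n$ (after dropping the nonpositive competition term), so $n(t)\le n(0)e^{a_2 t}$, which is finite on any bounded interval; hence $x,y,z$ all stay bounded on $[0,T_{\max})$ if $T_{\max}<\infty$. Since a solution that remains in a bounded set cannot reach the end of its maximal interval in finite time, we conclude $T_{\max}=+\infty$. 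The only mild subtlety — the ``hard part,'' such as it is — is bookkeeping the sign of the competition terms so that the differential inequality for $n$ is clean; everything else is the routine Picard/comparison machinery, so I would keep that part terse.
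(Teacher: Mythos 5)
Your proposal is correct, and it begins the same way the paper does: local Lipschitz continuity of the right-hand side gives a unique local solution, and nonnegativity follows from the representation $u(t)=u(0)\exp\bigl(\int_0^t g_i\,ds\bigr)$, since each equation has the factored form $\dot u=u\,g_i(x,y,z)$ (note that you write this correctly with the per-capita rates $g_i$, whereas the paper's displayed formulas use $e^{\int_0^t f_i\,ds}$ with the full right-hand sides $f_i$, which is only a notational slip for the same idea). Where you genuinely go further is the last step: the paper's own proof stops at existence and uniqueness on a finite interval $(0,\xi)$ and never argues within this theorem that the solution extends to $(0,+\infty)$; that gap is effectively closed only by the separate boundedness theorem (Theorem 2), proved via the function $\omega=x+y+z$ and the differential inequality $\dot\omega+\mu\omega\le l$. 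Your comparison bounds $x(t)\le\max\{x(0),a_1/b_1\}$ and, for $n=y+z$ using $a_2\ge a_3$ and the cancellation of the $\theta yz$ terms, $n(t)\le n(0)e^{a_2t}$, rule out finite-time blow-up directly and so justify $T_{\max}=+\infty$ without invoking uniform boundedness. The trade-off is that your estimate on the predators grows exponentially in $t$, so it proves global existence (all the theorem asks) but not the uniform ultimate bound, which the paper's $\omega$-argument additionally delivers; as a proof of the stated theorem, yours is the more complete of the two.
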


\begin{proof} 
We have $\frac{dx}{dt}=f_1(x,y,z)$,  $\frac{dy}{dt}=f_2(x,y,z)$, $\frac{dz}{dt}=f_3(x,y,z)$. Integrating we get 
$x(t)=x(0)e^{\int_{0}^{t} f_1(x,y,z) ds}$, $y(t)=y(0)e^{\int_{0}^{t} f_2(x,y,z) ds}$, $z(t)=z(0)e^{\int_{0}^{t} f_3(x,y,z) ds}$, 
where $x(0)=x_0>0$, $y(0)=y_0>0$, $z(0)=z_0>0$. Since $f_1, \ f_2, \ f_3$ are continuous function and hence locally Lipschitzian on $R^3_+$, the solution with positive initial condition  exists and unique on $(0,\xi)$ where $0<\xi<\infty$.
 Hence the theorem.
\end{proof}

\subsection{Boundedness}

\begin{theorem} 
All the solutions of the system which initiate in $\mathbb{R}_{+}^{3}$ are uniformly bounded.
\end{theorem}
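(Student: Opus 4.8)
The plan is the standard two-stage boundedness argument for Leslie--Gower type systems: first obtain an a priori bound for the prey $x$ directly from its own equation, and then control the whole trajectory by estimating the single auxiliary function $W = x + y + z$.

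First I would note that, by Theorem~1, $y(t), z(t) \ge 0$, so the two predation terms in \eqref{model-4-eq1} are non-positive and hence $\dot x \le a_1 x - b_1 x^2$. Comparison with the logistic equation gives $x(t) \le L := \max\{x(0),\, a_1/b_1\}$ for all $t \ge 0$ and $\limsup_{t\to\infty} x(t) \le a_1/b_1$. This step matters because the only term I will have to keep from the predator equations carries the factor $1/(k_2+x)$, which can be bounded below by $1/(k_2+L)$ only once the prey is known to be bounded.

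Next I would set $W(t) = x(t) + y(t) + z(t)$ and differentiate along solutions. Adding \eqref{model-4-eq1}--\eqref{model-4-eq3}, the terms $\mp\theta yz$ cancel, the prey predation terms and the predator intra-specific competition terms are non-positive; discarding the former and keeping the latter I obtain
\[
\dot W \;\le\; a_1 x - b_1 x^2 + a_2 y + a_3 z - \frac{(c_2 y + c_3 z)(y+z)}{k_2 + x}.
\]
Fix any $\eta > 0$, put $n := y+z$ and $c_0 := \min\{c_2, c_3\} > 0$, and use $a_2 \ge a_3$, $c_2 y + c_3 z \ge c_0 n$, and $k_2 + x \le k_2 + L$ to get
\[
\dot W + \eta W \;\le\; \big[(a_1+\eta)x - b_1 x^2\big] + \Big[(a_2+\eta)n - \tfrac{c_0}{k_2+L}\,n^2\Big] \;\le\; \Lambda,
\]
where $\Lambda := \dfrac{(a_1+\eta)^2}{4 b_1} + \dfrac{(a_2+\eta)^2 (k_2 + L)}{4 c_0}$, since each bracket is a concave quadratic bounded above by its vertex value.

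Finally I would apply the elementary differential inequality $\dot W \le \Lambda - \eta W$, which yields $W(t) \le \frac{\Lambda}{\eta} + \big(W(0) - \frac{\Lambda}{\eta}\big) e^{-\eta t} \le \frac{\Lambda}{\eta} + W(0)$ for all $t \ge 0$, together with $\limsup_{t\to\infty} W(t) \le \Lambda/\eta$. Because $x, y, z \ge 0$ each coordinate is dominated by $W$, so every solution is bounded; moreover, replacing $L$ by $a_1/b_1$ in the asymptotic estimate shows the ultimate bound $\Lambda/\eta$ is independent of the initial data, i.e. the system is uniformly bounded (one may further optimize over $\eta$). The only genuine obstacle is the ordering just described: the $1/(k_2+x)$ coupling in the predator equations forces the prey bound to be established first, after which everything reduces to maximizing two downward parabolas.
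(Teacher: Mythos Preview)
Your argument is correct, and in fact it is more complete than the paper's own proof. Both approaches use the same auxiliary function $W=x+y+z$ and the linear differential inequality $\dot W+\eta W\le \text{const}$, but the paper simply discards the predator intra-specific competition term $-\dfrac{(c_2y+c_3z)(y+z)}{k_2+x}$ and then asserts, without justification, that the remaining expression $\dfrac{(a_1+\mu)^2}{4b_1}+(a_2+\mu)y+(a_3+\mu)z$ is bounded by some constant $l$; as written that step is a gap, since the $y,z$ terms are a priori unbounded. Your two-stage route---first trapping $x$ by logistic comparison so that $k_2+x\le k_2+L$, and only then retaining the competition term and recognising $(a_2+\eta)n-\dfrac{c_0}{k_2+L}\,n^2$ as a downward parabola---is precisely the missing ingredient that closes the argument. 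The preliminary bound on $x$ is what buys you a uniform lower bound on $1/(k_2+x)$, and hence an honest quadratic in $n=y+z$; the paper's shortcut skips this and therefore does not actually establish boundedness of the predators.
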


\begin{proof} Let us define a function
$ \omega=x+y+z$. Therefore, we have\\
\begin{eqnarray*}
\frac{d\omega}{dt}+\mu\omega
&=&\frac{dx}{dt}+\frac{dy}{dt}+\frac{dz}{dt}+\mu(x+y+z)\\
 &=&[a_1x(1-\frac{b_1}{a_1}x)+\mu x]-\frac{(c_1y+pc_1z)x^2}{x^2+k_1}-\frac{(c_2y+c_3z)(y+z)}{x+k_2}\\
 &+&(a_2+\mu)y+(a_3+\mu)z\\
& \leq & (a_1+\mu-b_1x)x+(a_2+\mu)y+(a_3+\mu)z\\
&\leq&(\frac{(a_1+\mu)^2}{4b_1})+(a_2+\mu)y+(a_3+\mu)z 
\text{ for each } \mu>0.
\end{eqnarray*}
Hence we find $l>0$ such that $\frac{dw}{dt}+\mu\omega \leq l $, $\forall t \in (0,t_b)$.
Using the theory of differential inequality \cite{birkhoffordinary}, we obtain $0<\omega(x,y,z)\leq\frac{l}{\mu}(1-e^{-ut})+\omega\Big(x(0),y(0),z(0)\Big)e^{-\mu t}$ and for $t_b\rightarrow\infty$  we have $0<\omega\leq \frac{l}{\mu}$.\\
Hence all the solutions of the system that initiate in $\mathbb{R}_{+}^{3}$ are confined in the region $\gamma=\{{(x,y,z)\in\mathbb{R}_{+}^{3}:\omega=\frac{l}{\mu}+\epsilon}\}$ for any $\epsilon>0$ and for $t$ large enough.\\
Hence the theorem.
\end{proof}

\subsection{Equilibrium points}
The system of equations (\ref{model-1}) has the equilibrium points $E_0(0,0,0)$, $E_1(0,0,\frac{a_{3}k_{2}}{c_{3}})$, $E_2(0,\frac{a_2 k_2}{c_2},0)$,
$E_3(0,y_{3},z_3)$, $E_4(\frac{a_{1}}{b_{1}},0,0)$, 
$E_5(x_5,y_5,0)$, $E_6(x_6,0,z_6)$ and $E_*(x_*,y_*,z_*)$. The co-existence equilibrium is $E_*(x_*,y_*,z_*)$ where 
$y_*=\frac{-a_3\theta x_*-a_3k_2\theta+a_2c_3-a_3c_2}{\theta(\theta x_*+k_2\theta+c_2-c_3)}$ ,\\ $z_*=-\frac{-a_2\theta x_*-a_2k_2\theta+a_2c_3-a_3c_2}{\theta(\theta x_*+k_2\theta+c_2-c_3)}$ and  $x_*$ is root of the following equation\\
$P(x)=A_4x^4+A_3x^3+A_2x^2+A_1x+A_0=0$.\\
 Here $A_4=b_1\theta^2$,\\
 $A_3=b_1k_2\theta^2-a_1\theta^2+b_1c_2\theta-b_1c_3\theta$,\\
 $A_2=-a_1k_2\theta^2+a_2c_1\theta p+b_1k_1\theta^2-a_1c_2\theta+a_1c_3\theta-a_3c_1\theta$,\\
 $A_1=a_2c_1k_2\theta p+b_1k_1k_2\theta^2-a_1k_1\theta^2-a_2c_1c_3p+a_3c_1c_2p-a_3c_1k_2\theta+b_1c_2k_1\theta-b_1c_3k_1\theta+a_2c_1c_3-a_3c_1c_2$,\\
 $A_0=-a_1k_1k_2\theta^2-a_1c_2k_1\theta+a_1c_3k_1\theta$.\\ We consider $P(x)=(\gamma_1x^2+\delta_1x+\phi_1)(\gamma_2x^2+\delta_2x+\phi_2)$.\\
{\bf Case-1}: $P(x)$ has two real roots if either
$\delta_1^2-4\gamma_1\phi_1>0$ or $\delta_2^2-4\gamma_2\phi_2>0$.\\ For the set of parameters $ a_1 = 4.5, \ a_2 = 3.8, \   a_3 = 0.005, \  b_1 = 0.075, \ k_1 = 100, \ k_2= 160, \ c_1 = 2.8, \ c_2 = 1.97, \ c_3 = 1.95, \ \theta= 0.0937, \ p= 0.047$, there are two real roots $[x = 56.43479200, \ y = 3.837197569, \ z = 36.62450011]$ and $[x = -161.0505378, \ y = -1006.972848, \ z = 1057.801828] $ in which first one is biologically feasible.\\
{\bf Case-2}: $P(x)$ has four real roots if
$\delta_1^2-4\gamma_1\phi_1>0$ and $\delta_2^2-4\gamma_2\phi_2>0$.\\
For  the set of parameters $a_1 = 5.0, \ a_2 = 7.8, \ a_3 = 1.5, \ b_1 = 0.0005, \ k_1 = 50, \ k_2 = 55, \ c_1 = 1.7, \ c_2 = 1.05, \ c_3 = 1.0, \ \theta = 0.0217, \ p = 0.73 $, there are four real roots $[x = 0.5990751338, \ y = 161.9321592, \ z = 116.8375923]$, $[x = -65.33582457, \ y = -1734.893676, \ z = 2108.504719]$, $[x = 73.69033011, \ y = 33.00904773, \ z = 252.2068593]$, $[x = 9933.742272, \ y = -67.78533292, \ z = 358.0409590]$ in which first and third one  are biologically feasible.\newline

\par For the equilibrium point $E_5(x_5,y_5,0)$, $z_5=0$ gives $y_5=\frac{a_2(x_5+k_2)}{c_2}$. Here $x_5$ is the root of \\$Q(x)=A_3x^3+A_2x^2+A_1x+A_0=0$, \\where $A_3=b_1c_2$, 
$A_2=-a_1c_2+a_2c_1$, 
$A_1=a_2c_1k_2+b_1c_2k_1$,
$A_0=-a_1c_2k_1$.\\
We consider $Q(x)=(\mu_1 x+\alpha_1)(\nu_1 x^2+\xi_1 x+\eta_1)$.\\
{\bf Case-1}: $Q(x)$ has only one real root if 
$\xi_1^2-4\nu_1\eta_1<0$, which yields $x_5=-\frac{\alpha_1}{\mu_1}$. \\For the set of parameters $ a_1 = 4.5, \ a_2 = 3.8, \ a_3 = 0.005, \ b_1 = 0.075, \ k_1 = 100, \ k_2 = 160, \ c_1 = 2.8, \ c_2 = 1.97, \ c_3 = 1.95, \ \theta = 0.0937, \ p = 0.047$, there is only one real root $[x = 0.5159678886, \ y = 309.6247096, \ z = 0]$ which is biologically feasible. \\
{\bf Case-2}: $Q(x)$ has three real roots if
$\xi_1^2-4\nu_1\eta_1>0$.\\ For the set of parameters $a_1 = 5.0, \ a_2 = 7.8, \ a_3 = 1.5, \ b_1 = 0.0005, \ k_1 = 50, \ k_2 = 55, \ c_1 = 1.7, \ c_2 = 1.05, \ c_3 = 1.0, \ \theta = 0.0217, \ p = 0.73 $, there are three real roots $[x = 0.3585095936, \ y = 411.2346427, \ z = 0]$, $[x = -91.96262935, \ y = -274.5795323, \ z = 0]$, $[x=-15165.53874, \ y = -1.122497163*10^5, \ z = 0]$ in which first one is biologically feasible.\newline

\par For the equilibrium point $E_6(x_6,0,z_6)$, $y_6=0$ gives $z_6=\frac{a_3(x_6+k_2)}{c_3}$. Here $x_6$ is the root of \\$R(x)=A_3x^3+A_2x^2+A_1x+A_0=0$,\\ where 
$A_3=b_1c_3$, 
$A_2=-a_1c_3+a_3c_1p$,
$A_1=a_3c_1k_2p+b_1c_3k_1$,
$A_0=-a_1c_3k_1$.\\
We consider $R(x)=(\mu_2 x+\alpha_2)(\nu_2 x^2+\xi_2 x+\eta_2)$.\\
{\bf Case-1}: $R(x)$ has only one real root if 
$\xi_2^2-4\nu_2\eta_2<0$, which yields $x_6=-\frac{\alpha_2}{\mu_2}$.\\ For the set of parameters $ a_1 = 4.5, \ a_2 = 3.8, \ a_3 = 0.005, \ b_1 = 0.075, \ k_1 = 100, \ k_2 = 160, \ c_1 = 2.8, \ c_2 = 1.97, \ c_3 = 1.95, \ \theta = 0.0937, \ p = 0.047$, there is only one real root  $[x = 59.98394610, \ y = 0, \ z = 0.5640614003]$ which is biologically feasible.\\
{\bf Case-2}: $R(x)$ has three real roots if
$\xi_2^2-4\nu_2\eta_2>0$.\\ For the set of parameters $a_1 = 5.0, \ a_2 = 7.8, \ a_3 = 1.5, \ b_1 = 0.0005, \ k_1 = 50, \ k_2 = 55, \ c_1 = 1.7, \ c_2 = 1.05, \ c_3 = 1.0, \ \theta = 0.0217, \ p = 0.73 $, there are three real roots $[x = 2.657590193, y = 0, z = 86.48638529]$, $[x = 30.13036196, \ y = 0, \ z = 127.6955429]$, $[x = 6244.212048, \ y = 0, \ z = 9448.818072]$ which are biologically feasible.


\section{System behaviour around boundary equilibria}
\subsection{Stability for $E_0$}
The characteristic equation for $E_0$ is given by  \\ \newline
$\left|\begin{array}{ccc} 
a_1-\lambda & 0 & 0 \\ 
0 & a_2-\lambda & 0\\
0 & 0 & a_3-\lambda\\
\end{array}
\right|=0.$\\ \newline
The equilibrium point $E_0$ has the eigenvalues $a_1$, $a_2$, $a_3$. All the eigenvalues are positive and it is unstable.

\subsection{Stability for $E_1$}
The characteristic equation for $E_1$ is given by\\ \newline
$\left|\begin{array}{ccc} 
a_1-\lambda & 0 & 0 \\ 
0 & (\frac{a_2c_3-a_3(c_2+k_2\theta)}{c_3})-\lambda & 0\\
\frac{a_3^2}{c_3} & a_3(-1+\frac{k_2\theta}{c_3}) & -a_3-\lambda\\
\end{array}
\right|=0.$\\ \newline
Since one of the eigenvalues of $E_1$ is $a_1$, which is always positive and so, $E_1$ is unstable.

\subsection{Stability for $E_2$}
The characteristic equation for $E_2$ is given by \\ \newline
$\left|\begin{array}{ccc} 
a_1-\lambda & 0 & 0 \\ 
\frac{a_2^2}{c_2} & -a_2-\lambda & -a_2
-\frac{a_2k_2\theta}{c_2}\\
0 & 0 & (a_3-\frac{a_2c_3}{c_2}+\frac{a_2k_2\theta}{c_2})-\lambda\\
\end{array}
\right|=0.$\\ \newline
Since one of the eigenvalues of $E_2$ is $a_1$, which is always positive and therefore, $E_2$ is unstable.

\subsection{Stability for $E_3$}
The characteristic equation of the equilibrium $E_3(0,y_3,z_3)$ is given by\\ \newline
$\left|\begin{array}{ccc} 
a_1-\lambda & 0 & 0 \\ 
\frac{c_2y_3(y_3+z_3)}{k_2^2} & (a_2-\frac{c_2(2y_3+z_3)}{k_2}-z_3 \theta)-\lambda & -\frac{c_2y_3}{k_2}-y_3 \theta\\
\frac{c_3z_3(y_3+z_3)}{k_2^2} & -\frac{c_3z_3}{k_2}+z_3 \theta & (a_3-\frac{c_3(y_3+2z_3)}{k_2}+y_3 \theta)-\lambda\\
\end{array}
\right|=0,$ \newline
where 
$y_3=\frac{-a_3k_2\theta+a_2c_3-a_3c_2}{\theta(k_2\theta+c_2-c_3)}$ and $z_3=-\frac{-a_2k_2\theta+a_2c_3-a_3c_2}{\theta(k_2\theta+c_2-c_3)}$. As eigenvalue $a_1$ for $E_3$ is always positive, the equilibrium point $E_3$ is unstable.

\subsection{Stability for $E_4$}
The characteristic equation for $E_4$ is given by\\ \newline
$\left|\begin{array}{ccc} 
-a_1-\lambda & -\frac{a_1^2c_1}{b_1^2(\frac{a_1^2}{b_1^2}+k_1)} & -\frac{a_1^2c_1p}{b_1^2(\frac{a_1^2}{b_1^2}+k_1)} \\ 
0 & a_2-\lambda & 0\\
0 & 0 & a_3-\lambda\\
\end{array}
\right|=0.$\\ \newline
Eigenvalues $a_2$ and $a_3$ are always positive of the equilibrium $E_4$. Hence $E_4$ is unstable.

\subsection{Stability for $E_5(x_5,y_5,0)$}\label{E_5}
At $E_5(x_5,y_5,0)$, the Jacobian matrix for the system is given by \newline
\[J_5=\left(\begin{array}{ccc}
m_{11} & m_{12} & m_{13}\\
m_{21} & m _{22} & m_{23}\\
m_{31} & m_{32} & m_{33}\\
\end{array}
\right),\] \\ \newline
where $m_{11}=-b_1x_5-\frac{c_1x_5y_5}{x_5^2+k_1}+\frac{2c_1x_5^3y_5}{(x_5^2+k_1)^2}$,\qquad$m_{12}=-\frac{c_1x_5^2}{x_5^2+k_1}$,\qquad$m_{13}=-\frac{pc_1x_5^2}{x_5^2+k_1}$,\qquad$m_{21}=\frac{c_2y_5^2}{(x_5+k_2)^2}$,\qquad$m_{22}=-\frac{c_2y_5}{x_5+k_2}$,\qquad$m_{23}=-\frac{c_2y_5}{x_5+k_2}-\theta y_5$,\qquad $m_{31}=0$,\qquad$m_{32}=0$,\qquad $m_{33}=\theta y_5+a_3-\frac{c_3y_5}{x_5+k_2}$.\\ \newline
The Characteristic equation for $J_5$ is given by \\ 
$(m_{33}-\lambda)\{(m_{11}-\lambda)(m_{22}-\lambda)-m_{21}m_{12}\}= 0$\\
$\Rightarrow (m_{33}-\lambda)\{\lambda^2-(m_{11}+m_{22})\lambda+m_{11}m_{22}-m_{21}m_{12}\}=0$\\
$\Rightarrow\lambda_{1,2}=\frac{m_{11}+m_{22}\pm\sqrt{(m_{11}+m_{22})^2-4(m_{11}m_{22}-m_{21}m_{12})}}{2}$ and $\lambda_3=m_{33}$.\\
We choose $m_{11}<0$ and $m_{33}<0$. Then $E_5$ will be stable if\\
(i) $b_1x_5+\frac{c_1x_5y_5}{x_5^2+k_1}>\frac{2c_1x_5^3y_5}{(x_5^2+k_1)^2}$,\\
(ii)$\frac{c_3y_5}{x_5+k_2}>\theta y_5+a_3$.
\subsection{Stability for $E_6(x_6,0,z_6)$}\label{E_6}
At $E_6(x_6,0,z_6)$, the Jacobian matrix for the system is given by\\
\[J_6=\left(\begin{array}{ccc}
m_{11} & m_{12} & m_{13}\\
m_{21} & m _{22} & m_{23}\\
m_{31} & m_{32} & m_{33}\\
\end{array}
\right),\] \\ where $m_{11}=-b_1x_6-\frac{pc_1x_6z_6}{x_6^2+k_1}+\frac{2pc_1x_6^3z_6}{(x_6^2+k_1)^2}$,\qquad$m_{12}=-\frac{c_1x_6^2}{x_6^2+k_1}$,\qquad$m_{13}=-\frac{pc_1x_6^2}{x_6^2+k_1}$,\qquad$m_{21}=0$,\qquad$m_{22}=a_2-\frac{c_2z_6}{x_6+k_2}-\theta z_6$,\qquad$m_{23}=0$,\qquad $m_{31}=\frac{c_3z
_6^2}{(x_6+k_2)^2}$,\qquad$m_{32}=\theta z_6-\frac{c_3z_6}{x_6+k_2}$,\qquad $m_{33}=-\frac{c_3z_6}{x_6+k_2}$.\\ \newline
The Characteristic equation for $J_6$ is given by \\
$(m_{22}-\lambda)\{(m_{11}-\lambda)(m_{33}-\lambda)-m_{31}m_{13}=0$\\
$\Rightarrow (m_{22}-\lambda)\{{\lambda^2-(m_{11}+m_{33})\lambda+m_{11}m_{33}-m_{31}m_{13}}\}=0$\\
$\Rightarrow\lambda_{1,2}=\frac{m_{11}+m_{33}\pm\sqrt{(m_{11}+m_{33})^2-4(m_{11}m_{33}-m_{31}m_{13})}}{2}$ and $\lambda_3=m_{22}$.\\
We choose $m_{22}<0$ and $m_{11}<0$. Then $E_6$ will be stable if\\
(i) $b_1x_6+\frac{pc_1x_6z_6}{x_6^2+k_1}>\frac{2pc_1x_6^3z_6}{(x_6^2+k_1)^2}$,\\
(ii)$\frac{c_2z_6}{x_6+k_2}+\theta z_6>a_2$.

\section{System behaviour near the coexistence equilibrium $E_*(x_*,y_*,z_*)$}\label{E_*}
The entries for the Jacobian at $E_*(x_*,y_*,z_*)$ are\\
$m_{11}=a_1-2b_1x_*+\frac{2c_1x_*^3y_*}{(x_*^2+k_1)^2}+\frac{2pc_1x_*^3z_*}{(x_*^2+k_1)^2}$, $m_{12}=-\frac{c_1x_*^2}{x_*^2+k_1}$, $m_{13}=-\frac{pc_1x_*^2}{x_*^2+k_1}$,\\ $m_{21}=\frac{c_2y_*(y_*+z_*)}{(x_*+k_2)^2}$, $m_{22}=-\frac{c_2y_*}{x_*+k_2}$, $m_{23}=-\frac{c_2y_*}{x_*+k_2}-\theta y_*$,\\ $m_{31}=\frac{c_3z
_*(y_*+z_*)}{(x_*+k_2)^2}$, $m_{32}=\theta z_*-\frac{c_3z_*}{x_*+k_2}$, $m_{33}=-\frac{c_3z_*}{x_*+k_2}$.

\subsection{Local Stability}
The characteristic equation for $J_*$ is 
$\theta^3+A_1\theta^2+A_2\theta+A_3=0$, where\\ $A_1=-m_{11}-m_{22}-m_{33}$, \\
$A_2=m_{11}m_{22}+m_{11}m_{33}+m_{22}m_{33}-m_{13}m_{31}-m_{23}m_{32}-m_{21}m_{12}$, \\
$A_3=m_{11}m_{23}m_{32}+m_{12}m_{21}m_{33}+m_{13}m_{22}m_{31}-m_{11}m_{22}m_{33}-m_{12}m_{23}m_{31}-m_{13}m_{21}m_{32}$,\\
$A_1A_2-A_3=-m_{11}^2m_{22}-m_{11}^2m_{33}-m_{22}^2m_{33}-m_{11}m_{22}^2-m_{11}m_{33}^2-m_{22}m_{33}^2-2m_{11}m_{22}m_{33}+m_{11}m_{13}m_{31}+m_{11}m_{12}m_{21}+m_{22}m_{12}m_{21}+m_{22}m_{23}m_{32}\\+m_{33}m_{23}m_{32}+m_{33}m_{13}m_{31}+m_{12}m_{23}m_{31}+m_{13}m_{21}m_{32}$. \\
Assume $(i)$ $m_{11}<0$, 
$(ii)$ $m_{32}>0$,
$(iii)$ $m_{11}m_{31}+m_{33}m_{31}+m_{21}m_{32}<0$ or $m_{11} m_{12}+m_{22}m_{12}+m_{13}m_{32}>0$ or $m_{22}m_{23}+m_{33}m_{23}+m_{13}m_{21}>0$ and we have\\ $A_1>0$,\ $A_3>0$ and $A_1A_2-A_3>0$.
By Routh-Hurwitz criterion, the interior or co-existence equilibrium $E_*(x_*,y_*,z_*)$ is  locally asymptotically stable.

\subsection{Global Stability}
\begin{theorem} The co-existence equilibrium point $E_*$ is globally asymptotically stable if $P_1>0$, $P_2>0$ and $P_3>0$ where $P_1$ , $P_2$, $P_3$ are defined latter.
\end{theorem}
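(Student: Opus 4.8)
The plan is to build a Volterra‑type (logarithmic) Lyapunov function and reduce its derivative to a quadratic form in the deviations from $E_*$. For positive weights $w_1,w_2,w_3$, to be fixed during the argument, put
\[
V(x,y,z)=w_1\!\left(x-x_*-x_*\ln\frac{x}{x_*}\right)+w_2\!\left(y-y_*-y_*\ln\frac{y}{y_*}\right)+w_3\!\left(z-z_*-z_*\ln\frac{z}{z_*}\right).
\]
On the open positive octant $V$ is $C^1$, $V(E_*)=0$, and each summand $u-u_*-u_*\ln(u/u_*)$ is nonnegative and vanishes only at $u=u_*$, so $V$ is positive definite about $E_*$ and radially unbounded on the octant; hence it is a legitimate Lyapunov candidate. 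Differentiating along (\ref{model-1}) gives $\dot V=w_1\frac{x-x_*}{x}\dot x+w_2\frac{y-y_*}{y}\dot y+w_3\frac{z-z_*}{z}\dot z$.

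Next I would eliminate $a_1,a_2,a_3$ using the equilibrium relations read off from $f_1(E_*)=f_2(E_*)=f_3(E_*)=0$, namely $a_1=b_1x_*+\frac{c_1x_*(y_*+pz_*)}{k_1+x_*^{2}}$, $a_2=\frac{c_2(y_*+z_*)}{k_2+x_*}+\theta z_*$, and $a_3+\theta y_*=\frac{c_3(y_*+z_*)}{k_2+x_*}$. After this substitution each bracket becomes a combination of $x-x_*$, $y-y_*$, $z-z_*$ once one applies the elementary factorizations
\[
\frac{x}{k_1+x^{2}}-\frac{x_*}{k_1+x_*^{2}}=\frac{(x-x_*)(k_1-xx_*)}{(k_1+x^{2})(k_1+x_*^{2})},\qquad
\frac{1}{k_2+x}-\frac{1}{k_2+x_*}=\frac{-(x-x_*)}{(k_2+x)(k_2+x_*)}.
\]
Writing $u=(x-x_*,\,y-y_*,\,z-z_*)^{\top}$, this yields $\dot V=-\,u^{\top}M(x,y,z)\,u$ with $M=(M_{ij})$ symmetric, where the $(y,z)$‑block entries $M_{22}=w_2c_2/(k_2+x_*)$, $M_{33}=w_3c_3/(k_2+x_*)$ and $M_{23}=\tfrac12\big[(w_2c_2+w_3c_3)/(k_2+x_*)+(w_2-w_3)\theta\big]$ are constants, whereas $M_{11}=w_1b_1+w_1c_1\,\frac{(y+pz)(k_1-xx_*)}{(k_1+x^{2})(k_1+x_*^{2})}$ and $M_{12},M_{13}$ (which carry the factor $\frac{y+z}{(k_2+x_*)(k_2+x)}$) still depend on $(x,y,z)$; in particular $M_{11}$ contains the sign‑indefinite factor $k_1-xx_*$ coming from the Holling type III term.

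To remove the state dependence I would invoke the boundedness theorem: every trajectory eventually enters the absorbing region $\gamma$, on which $0\le x,y,z\le l/\mu=:K$. Replacing $x,y,z$ in $M_{11},M_{12},M_{13}$ by their extreme values over $\gamma$ produces constant two‑sided bounds for these entries (e.g.\ $M_{11}\ge w_1b_1-w_1c_1K(1+p)\,\max\{0,\,x_*K-k_1\}/[k_1(k_1+x_*^{2})]$, with similar estimates for $M_{12},M_{13}$). After first fixing the weights — for instance $w_2=w_3$, which already makes $M_{23}>0$, and $w_1$ small enough that the off‑diagonal entries are controlled by $M_{11}M_{22}$ and $M_{11}M_{33}$ — I then \emph{define} $P_1$, $P_2$, $P_3$ to be the resulting guaranteed lower bounds for the three leading principal minors of $M$, i.e.\ for $M_{11}$, for $M_{11}M_{22}-M_{12}^{2}$, and for $\det M$. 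If $P_1>0$, $P_2>0$, $P_3>0$, then by Sylvester's criterion $M(x,y,z)$ is positive definite for every $(x,y,z)\in\gamma$, so $\dot V<0$ on $\gamma\setminus\{E_*\}$ and $\dot V=0$ only at $E_*$; since $\gamma$ is a global attractor for (\ref{model-1}) in $\mathbb{R}^{3}_{+}$, Lyapunov's theorem (or LaSalle's invariance principle) gives the global asymptotic stability of $E_*$.

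The main obstacle is precisely the Holling type III contribution: because $k_1-xx_*$ changes sign, positivity of the quadratic form is not automatic, and the bad part of $M_{11}$ must be absorbed either by restricting to $\{xx_*<k_1\}$ or by using the a priori bound $K$. Producing bounds $P_1,P_2,P_3$ that are simultaneously correct and not hopelessly conservative, and choosing $w_1,w_2,w_3$ so that the state‑dependent off‑diagonal entries $M_{12},M_{13}$ stay dominated by the diagonal, is the delicate step; the remaining manipulations are routine algebra.
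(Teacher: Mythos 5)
Your proposal follows essentially the same route as the paper: the same Volterra logarithmic Lyapunov function (the paper simply takes $w_1=w_2=w_3=1$, under which the $\pm\theta$ cross terms cancel exactly), the same equilibrium substitutions and factorizations, reduction of $\dot V$ to $-u^{\top}Qu$, and the definition of $P_1,P_2,P_3$ as the leading principal minors of the quadratic form whose positivity yields $\dot V<0$ and hence global asymptotic stability. One small correction: the $(y,z)$-block entries are not constants --- they carry the factor $1/(k_2+x)$, not $1/(k_2+x_*)$ --- but this is harmless for your argument, since you bound all state-dependent entries over the absorbing region anyway (the paper instead leaves the entries as functions of $x$ and imposes $P_1,P_2,P_3>0$ directly).
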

\begin{proof} Let us define the function $L(x,y,z)=L_1(x,y,z)+L_2(x,y,z)+L_3(x,y,z)$, \\
where
$L_1=x-x_*-x_*ln\frac{x}{x_*}$, \ 
$L_2=y-y_*-y_*ln\frac{y}{y_*}$, \ $L_3=z-z_*-z_*ln\frac {z}{z_*}$.\\
It is to be shown that $L$ is a Lyapunav function and $L$ vanishes at $E_*$ and it is positive for all $x,y,z>0$.  Hence $E_*$ represents its global minimum. We have
\begin{eqnarray*}
\frac{dL_1}{dt}&=&(x-x_*)\Big(a_1-b_1x-\frac{c_1xy}{x^2+k_1}-\frac{pc_1xz}{x^2+k_1}\Big)\\
&=&(x-x_*)\Big(b_1x_*+\frac{c_1x_*y_*}{x_*^2+k_1}+\frac{pc_1x_*z_*}{x_*^2+k_1}-b_1x-\frac{c_1xy}{x^2+k_1}-\frac{pc_1xz}{x^2+k_1}\Big)\\
&=&(x-x_*)\Big[\frac{c_1(x-x_*)(xx_*-k_1)(y_*+p z_*)}{(x_*^2+k_1)(x^2+k_1)}-b_1(x-x_*)-\frac{c_1x(y-y_*)}{x^2+k_1}\\
&-&\frac{pc_1x(z-z_*)}{x^2+k_1}\Big],
\end{eqnarray*}

\begin{eqnarray*}
\frac{dL_2}{dt}&=&(y-y_*)\Big(a_2-\frac{c_2(y+z)}{x+k_2}-\theta z\Big)\\
&=&(y-y_*)\Big(\theta z_*+\frac{c_2(y_*+z_*)}{x_*+k_2}-\frac{c_2(y+z)}{x+k_2}-\theta z\Big)\\
&=&(y-y_*)\Big[-\theta(z-z_*)+\frac{c_2(y_*+z_*)(x-x_*)}{(x_*+k_2)(x+k_2)}-\frac{c_2((y-y_*)+(z-z_*))}{x+k_2}\Big],
\end{eqnarray*}

\begin{eqnarray*}
\frac{dL_3}{dt}&=&(z-z_*)\Big(a_3-\frac{c_3(y+z)}{x+k_2}+\theta y\Big)\\
&=&(z-z_*)\Big(-\theta z_*+\frac{c_3(y_*+z_*)}{x_*+k_2}-\frac{c_3(y+z)}{x+k_2}+\theta y\Big)\\
&=&(z-z_*)\Big[\theta(y-y_*)+\frac{c_3(y_*+z_*)(x-x_*)}{(x_*+k_2)(x+k_2)}-\frac{c_3((y-y_*)+(z-z_*))}{x+k_2}\Big].
\end{eqnarray*}
We consider
\begin{eqnarray*}
\frac{dL}{dt}&=&-\Big[A(x-x_*)^2+B(y-y_*)^2+C(z-z_*)^2+2H(x-x_*)(y-y_*)\\
&+& 2F(y-y_*)(z-z_*)+2G(z-z_*)(x-x_*)\Big]=-V^T Q V
\end{eqnarray*}

where $V=\Big((x-x_*),(y-y_*),(z-z_*)\Big)^T$ and $Q$ is symmetric quadratic form given by\\
$Q=
\left(\begin{array}{ccc}
A & H & G\\
H & B & F\\
G & F & C
\end{array}\right)$\\
with the entries that are functions only of the variable $x$ and \\
$A=b_1-\frac{c_1(y_*+pz_*)(xx_*-k_1)}{(x_*^2+k_1)(x^2+k_1)}$, $ B=\frac{c_2}{x+k_2}$, $C=\frac{c_3}{x+k_2}$, $F=\frac{c_2+c_3}{2(x+k_2)}$,\\
$H=\frac{1}{2}[\frac{c_1x}{x^2+k_1}-\frac{c_2(y_*+z_*)}{(x_*+k_2)(x+k_2)}]$, $G=\frac{1}{2}[\frac{pc_1x}{x^2+k_1}-\frac{c_3(y_*+z_*)}{(x_*+k_2)(x+k_2)}]$.

If the matrix $Q$ is positive definite, then
$\frac{dL}{dt}<0$. So, all the principal minors of $Q$, namely, $P_1\equiv A$, \ $P_2 \equiv AB-H^2 $, \ $P_3\equiv ABC+2FGH-AF^2-BG^2-CH^2=C(AB-H^2)+G(FH-BG)+F(GH-AF)$ to be positive
, i.e. , $P_1>0, \ P_2>0, \ P_3>0$.

\end{proof}

\section{Persistence}
If a compact set $D\subset\Omega=\{(x,y,z): x>0, y>0, z>0\}$ exists such that all solution of the system (\ref{model-1}) eventually enter and remain in $D$, the system is called persistent.
\\

\begin{proposition} The system (\ref{model-1}) is persistent if\\
\begin{enumerate}
\item $\frac{a_2c_3}{a_3}>(c_2+k_2\theta)$,
\item $a_3c_2>a_2c_3$,
\item $a_3c_2>a_2c_3, \ a_1>b_1x_5+\frac{a_2c_1x_5(k_2+x_5)}{c_2(k_1+x_5^2)}$,
\item $\frac{a_2c_3}{a_3}>\Big(c_2+(k_2+x_6)\theta \Big), \ a_1c_3(k_1+x_6^2)>x_6\Big(a_3c_1p(k_2+x_6)+b_1c_3(k_1+x_6^2)\Big)$.
\end{enumerate}
\end{proposition}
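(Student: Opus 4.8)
The plan is to prove that (\ref{model-1}) is uniformly persistent (permanent), which — given the dissipativity already in hand — is exactly the assertion of the proposition. The input is the Boundedness theorem of Section \ref{PR}: every orbit from $\mathbb{R}^3_+$ enters and stays in the compact forward-invariant set $\gamma$, so we may work with the restricted, dissipative flow. Since the coordinate planes $\{x=0\}$, $\{y=0\}$, $\{z=0\}$ are invariant and their union is $\partial\Omega$, it suffices to produce $\eta>0$ with $\liminf_{t\to\infty}\min\{x(t),y(t),z(t)\}\ge\eta$ for every solution starting in $\Omega$; together with the upper bound $\omega\le l/\mu$ this yields the compact attracting set $D\subset\Omega$ of the definition. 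I would obtain this from the Butler--Freedman--Waltman criterion: a dissipative flow is uniformly persistent once the boundary flow admits an acyclic covering by isolated invariant sets each of whose stable set misses $\Omega$.

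First I would describe the boundary flow. On $\{x=0\}$ the pair $(y,z)$ obeys a Lotka--Volterra-type planar system whose rest points are $E_0,E_1,E_2,E_3$; a Dulac function $1/(yz)$ makes its divergence negative on the open quadrant, so there is no boundary periodic orbit there and every such orbit tends to one of these equilibria. On $\{z=0\}$ one gets the planar Leslie--Gower/Holling-III subsystem $\dot x=a_1x-b_1x^2-\tfrac{c_1x^2y}{k_1+x^2}$, $\dot y=a_2y-\tfrac{c_2y^2}{k_2+x}$ with rest points $E_0,E_4,E_5$, and symmetrically on $\{y=0\}$ with $E_0,E_4,E_6$. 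Granting (see the last paragraph) that these two faces carry no periodic orbit, the boundary invariant set is $\{E_0,\dots,E_6\}$, and I would verify it is an isolated acyclic covering: $E_0$ and $E_4$ are sources relative to the interior, and each of the others is hyperbolic with its sole unstable direction pointing into $\Omega$, so no heteroclinic cycle lives in $\partial\Omega$.

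Next I would check that each of $E_0,\dots,E_6$ has stable set disjoint from $\Omega$, by inspecting the eigenvalue(s) of the Jacobian transverse to the plane(s) on which the equilibrium sits, using the Jacobian entries recorded in Sections \ref{E_5} and \ref{E_6}. For $E_0,E_4$ the transverse eigenvalues are among $a_1,a_2,a_3>0$ and no hypothesis is needed; $E_3$ lies on $\{x=0\}$ with transverse eigenvalue $a_1>0$. For $E_1$ the eigenvalue transverse to $\{y=0\}$ is $\bigl(a_2c_3-a_3(c_2+k_2\theta)\bigr)/c_3$, positive iff $\tfrac{a_2c_3}{a_3}>c_2+k_2\theta$, i.e. (1); for $E_2$ the eigenvalue transverse to $\{z=0\}$ is $a_3-\tfrac{a_2c_3}{c_2}+\tfrac{a_2k_2\theta}{c_2}$, made positive by (2). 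For $E_5=(x_5,y_5,0)$ the transverse eigenvalue is $m_{33}=\theta y_5+a_3-\tfrac{c_3y_5}{x_5+k_2}$ and for $E_6=(x_6,0,z_6)$ it is $m_{22}=a_2-\tfrac{c_2z_6}{x_6+k_2}-\theta z_6$; inserting $y_5=\tfrac{a_2(x_5+k_2)}{c_2}$, $z_6=\tfrac{a_3(x_6+k_2)}{c_3}$ and using that $x_5,x_6$ solve $Q=0,R=0$, the positivity of these two eigenvalues is what conditions (3) and (4) record. Thus under (1)--(4) every boundary invariant set is a uniform weak repeller, and the Butler--McGehee lemma together with acyclicity of the covering gives uniform persistence, hence $D$.

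The step I expect to be the genuine obstacle is not the eigenvalue bookkeeping — mechanical once the equilibrium identities $f_1=f_2=f_3=0$ are invoked — but the claim that the planar predator--prey faces $\{y=0\}$ and $\{z=0\}$ carry no periodic orbit: a Leslie--Gower/Holling-III plane is not obviously Dulac-negative, and a surviving limit cycle $\Gamma$ would have to be adjoined to the covering, forcing one to bound its transverse Floquet exponent $\tfrac{1}{T}\int_0^T m_{33}\,dt$ (resp. $\tfrac1T\int_0^T m_{22}\,dt$). Two ways to close this: hunt for an explicit Dulac multiplier on each face; or — cleaner — bypass the catalogue of boundary $\omega$-limit sets entirely with Gard's average-Lyapunov-function method, taking $\sigma=x^{r_1}y^{r_2}z^{r_3}$ and showing that for a suitable choice of $r_i>0$ the function $\dot\sigma/\sigma=r_1\tfrac{\dot x}{x}+r_2\tfrac{\dot y}{y}+r_3\tfrac{\dot z}{z}$ is strictly positive on the whole (compact) boundary invariant set; evaluated at $E_1,E_2,E_5,E_6$ it reproduces conditions (1)--(4), and on any boundary cycle the inequality follows by time-averaging the same expression.
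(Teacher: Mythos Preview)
Your proposal is correct, and in fact your closing paragraph already contains the paper's proof: the authors go straight to Gard's average Lyapunov function $V=x^{\gamma_1}y^{\gamma_2}z^{\gamma_3}$, compute $\Pi=\dot V/V$, and verify $\Pi>0$ at each boundary equilibrium $E_0,\dots,E_6$, with conditions (1)--(4) arising exactly as you describe. They do not attempt the Butler--Freedman--Waltman machinery at all, and in particular never address the possibility of periodic orbits on the faces $\{y=0\}$ or $\{z=0\}$.

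Your primary route via acyclic coverings is thus genuinely different. It is more rigorous in spirit---you correctly flag that the Gard criterion as usually stated requires positivity of the time average of $\Pi$ along \emph{every} compact invariant set in $\partial\Omega$, not just at equilibria, and you note how to handle a boundary limit cycle by averaging---whereas the paper tacitly assumes the boundary $\omega$-limit sets are exhausted by $E_0,\dots,E_6$. What the paper's approach buys is brevity: no boundary phase-plane analysis, no Dulac function, no Floquet exponent. What your approach buys is an honest accounting of all boundary invariant sets, at the cost of either producing a Dulac multiplier for the Leslie--Gower/Holling-III face or carrying out the averaging argument you sketch. One small remark: the second inequalities in (3) and (4) are in fact identities at $E_5,E_6$ (since $f_1=0$ there), so the transverse-repulsion content of those conditions is carried entirely by their first inequalities; this is a quirk of the paper's statement rather than of your analysis.
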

\begin{proof}  Let us consider the method of  average Lyapunav function, see \cite{Gard1979persistence} , considering a function of the form $V(x,y,z)=x^{\gamma_1}y^{\gamma_2}z^{\gamma_3}$, where $\gamma_i=1,2,3$ are positive constant to be determined. We define \\
\begin{equation*}
\begin{aligned}
\Pi(x,y,z)&=\frac{\dot{V}}{V}\\
&=\gamma_1\Big(a_1-b_1x-\frac{c_1xy}{x^2+k_1}-\frac{pc_1xz}{x^2+k_1}\Big)
&+\gamma_2\Big(a_2-\frac{c_2(y+z)}{x+k_2}-\theta z\Big)\\
&+\gamma_3\Big(a_3-\frac{c_3(y+z)}{x+k_2}+\theta y\Big).\\
\end{aligned}
\end{equation*}
We are to prove that this function is positive at each boundary equilibrium. We have
$\Pi(0,0,0)=\gamma_1a_1+\gamma_2a_2+\gamma_3a_3>0$ and $\Pi(\frac{a_1}{b_1},0,0)=\gamma_2a_2+\gamma_3a_3>0$.\\Here 
$\Pi(0,0,\frac{a_3k_2}{c_3})=a_1\gamma_1+\frac{a_2c_3-a_3(c_2+k_2\theta)}{c_3}\gamma_2>0$ follows by condition 1.\\
With the condition 2, we have \\
$\Pi(0,\frac{a_2k_2}{c_2},0)=a_1\gamma_1+\frac{(a_3c_2-a_2c_3+a_2k_2\theta)}{c_2}\gamma_3>0$. Also \\
\begin{equation*}
\begin{aligned}
&\Pi\Big(0,\frac{a_2c_3-a_3c_2-a_3k_2\theta}{\theta(k_2\theta+c_2-c_3)},-\frac{a_2c_3-a_3c_2-a_2k_2\theta}{\theta(k_2\theta+c_2-c_3)}\Big)\\
&=a_1 \gamma_1>0.
\end{aligned}
\end{equation*}
 We find
 \begin{equation*}
\begin{aligned}
\Pi\Big(x_5,\frac{a_2(x_5+k_2)}{c_2},0\Big)&=\gamma_1\Big(a_1-\Big(b_1x_5+\frac{a_2c_1x_5(k_2+x_5)}{c_2(k_1+x_5^2)}\Big)\Big)\\
&+\gamma_3\Big(\frac{a_3c_2-a_2c_3+a_2k_2\theta+a_2x_5\theta}{c_2}\Big)>0 \ \text{by the condition } 3,
\end{aligned}
\end{equation*}

\begin{equation*}
\begin{aligned}
\Pi\Big(x_6,0,\frac{a_3(x_6+k_2)}{c_3}\Big)&=\gamma_1\Big(\frac{a_1c_3(k_1+x_6^2)-x_6(a_3c_1p(k_2+x_6)+b_1c_3(k_1+x_6^2))}{c_3(k_1+x_6^2)}\Big)\\
&+\gamma_2\Big(\frac{a_2c_3-a_3(c_2+(k_2+x_6)\theta)}{c_3}\Big)>0 \ \text{ by the condition } 4.
\end{aligned}
\end{equation*}
Hence a suitable choice of $\gamma_1,\gamma_2,\gamma_3$ is required to ensure $\Pi>0$ at the boundary equilibria.
Hence $V$ is an average Lyapunav function and thus the system (\ref{model-1}) is persistent.
\end{proof}


\section{Numerical simulation}
Analytical studies can never be completed without numerical verification of the derived results. In this section, we present computer simulations of some solutions of the system (\ref{model-1}). Beside verification of our analytical findings, these numerical simulations are very important from practical point of view. We use four different set of numerical values for support of analytical results mentioned in Table \ref{LG_typeIII_set of parameters}.

\begin{table}[H]
\caption{Set of parameter values for numerical simulations; $S\equiv $Parameter sets}\label{LG_typeIII_set of parameters}
\setlength{\tabcolsep}{5pt}
\begin{tabular}{ | l l l l l l l l l l l l|}
\hline
\parbox[t]{0.021in}{$S$} &\parbox[t]{0.021in}{$a_1$} & \parbox[t]{0.021in}{$a_2$} & \parbox[t]{0.021in}{$a_3$} &\parbox[t]{0.021in}{ $b_1$} & \parbox[t]{0.021in}{$k_1$} & \parbox[t]{0.021in}{$k_2$} & \parbox[t]{0.021in}{$c_1$} & \parbox[t]{0.021in}{$c_2$} & \parbox[t]{0.021in}{$c_3$} & \parbox[t]{0.021in}{$\theta$} & \parbox[t]{0.021in}{$p$ }\\ \hline
$S_1$ & $4.5$ & $3.8$ & $0.005$ & $0.075$ & $100$ & $160$ & $2.8$ & $1.97$ & $1.95$ & $0.0937$ & $0.047$ \\ 
$S_2$ & $4.5$ & $3.8$ & $0.005$ & $0.075$ & $100$ & $20$ & $2.8$ & $1.97$ & $0.005$ & $0.0937$ & $0.047$ \\ 
$S_3$ & $5.0$ & $7.8$ & $1.5$ & $0.0005$ & $50$ & $55$ & $1.7$ & $1.05$ & $1.0$ & $0.0217$ & $0.73$ \\ 
$S_4$ & $4.0$ & $6.0$ & $0.05$ & $0.005$ & $100$ & $200$ & $0.08$ & $0.7$ & $0.50$ & $0.002537$ & $0.93$ \\ \hline

\end{tabular}
\end{table}

\begin{figure}[H]
\centering
(a)\includegraphics[width=5.5cm,height=3.5cm]{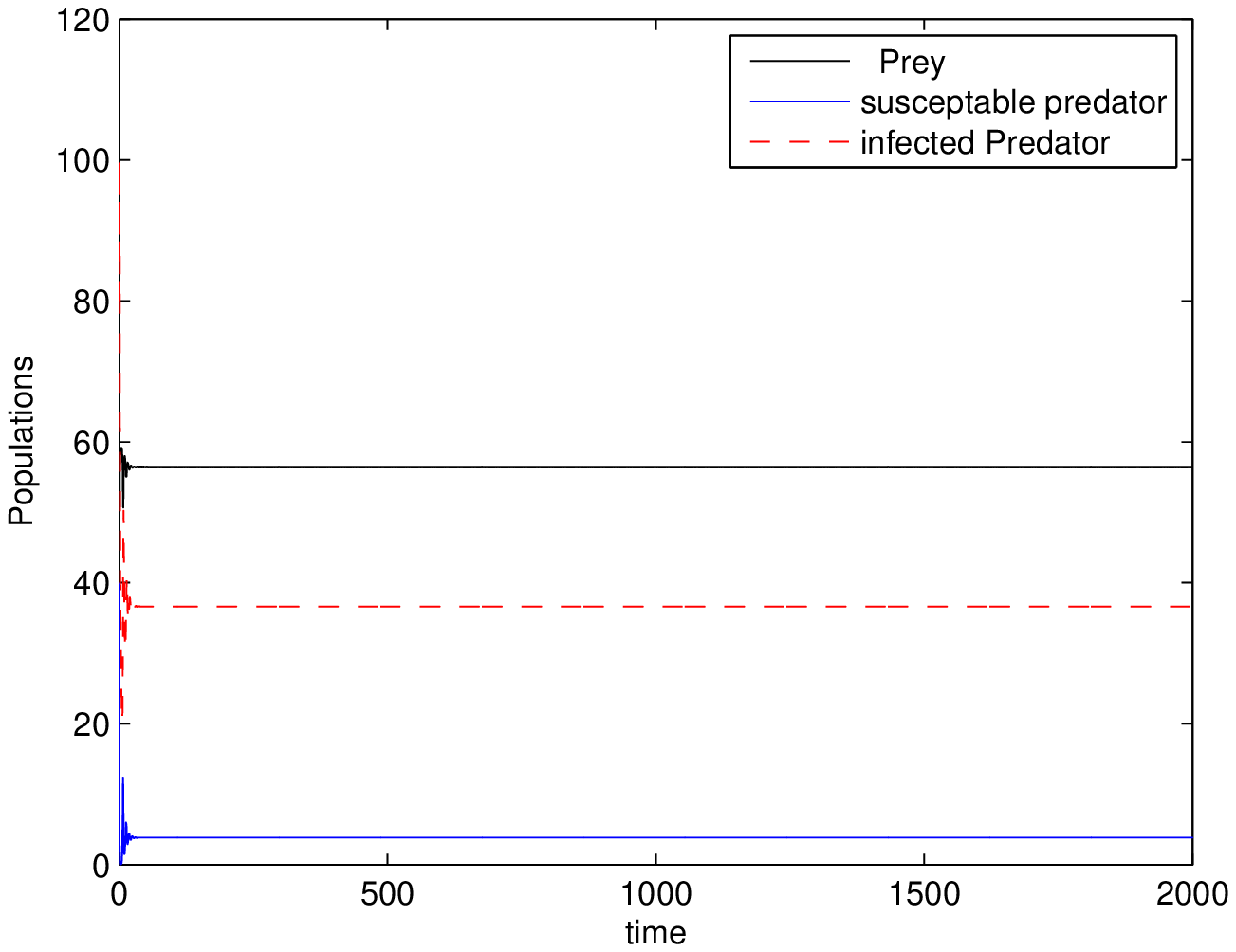}
(b)\includegraphics[width=5.5cm,height=3.5cm]{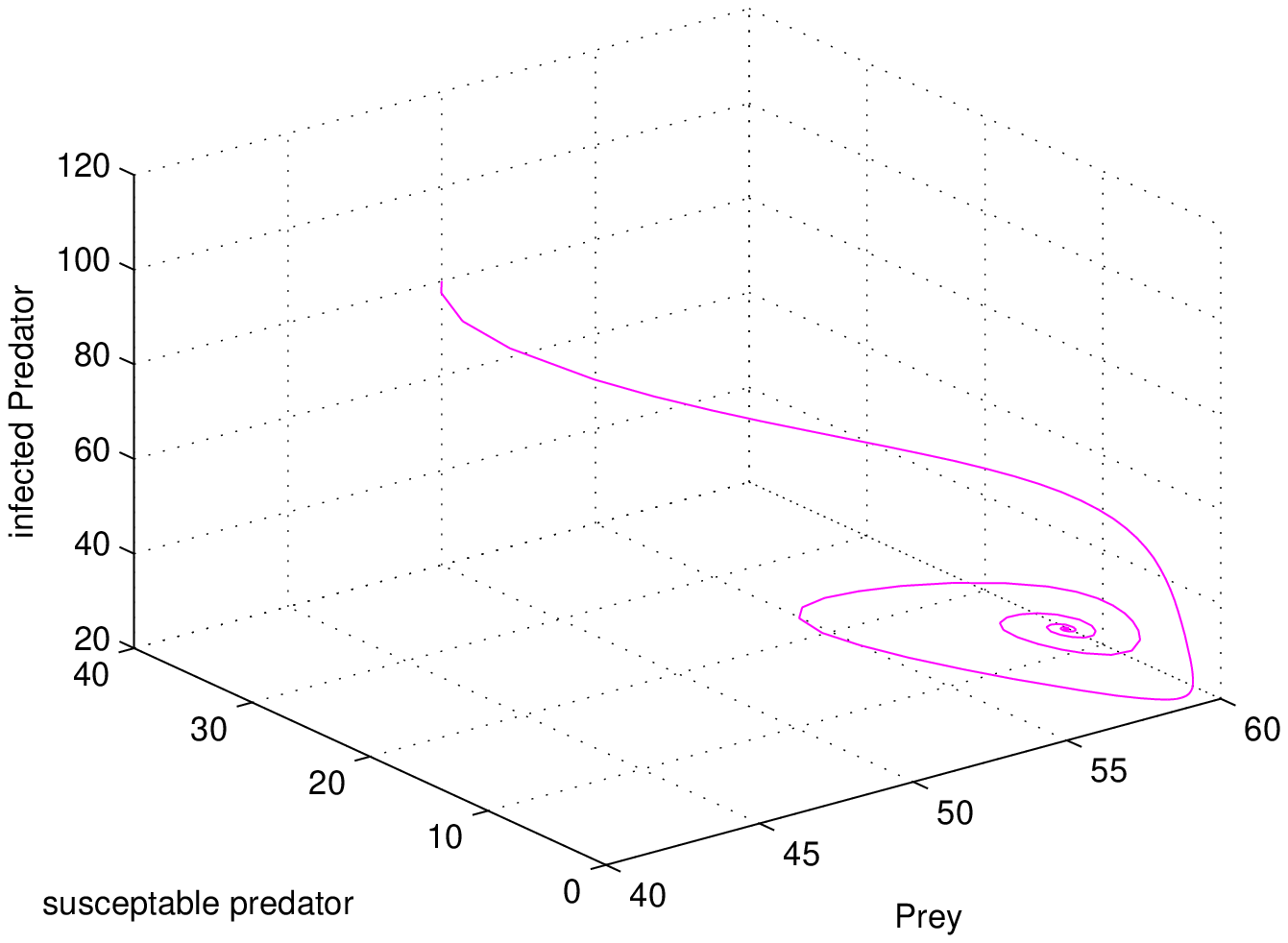}
\caption{\small{Stability behaviour of model the system (\ref{model-1}) around the equilibrium position $E_*$ with the initial conditions $ x_0=50, \ y_0=40, \ z_0=80$  and the set of parameter values  $S_1$, (a) Phase portrait, (b) Time series. Here $A_1=4.3463$, $A_2=2.7135$, $A_3 =4.8600$, $A_1A_2-A_3= 6.9339$.}} 
\label{fig50}
\end{figure}


\begin{figure}[htbp]
\centering
(a)\includegraphics[width=5.5cm,height=3.5cm]{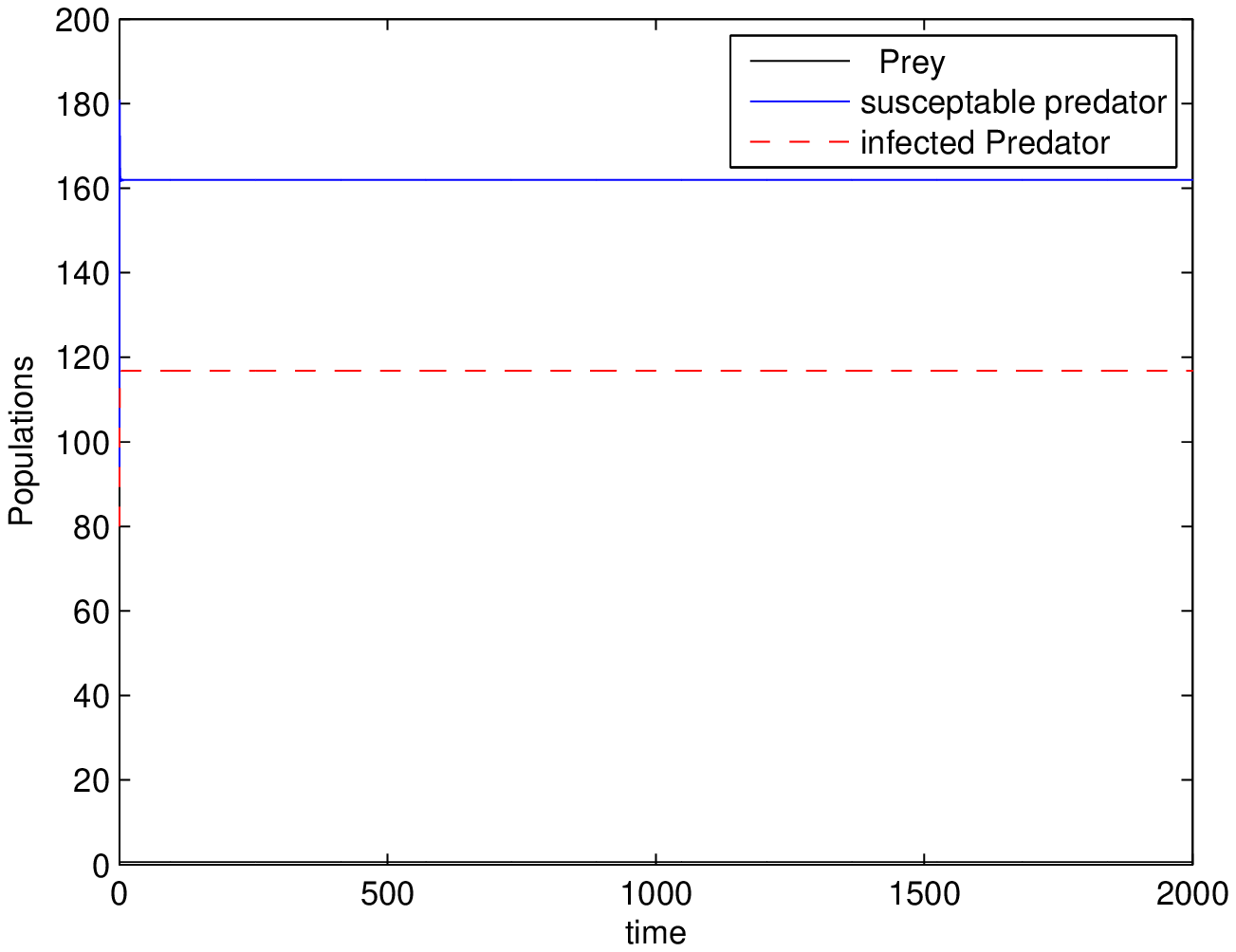}
(b)\includegraphics[width=5.5cm,height=3.5cm]{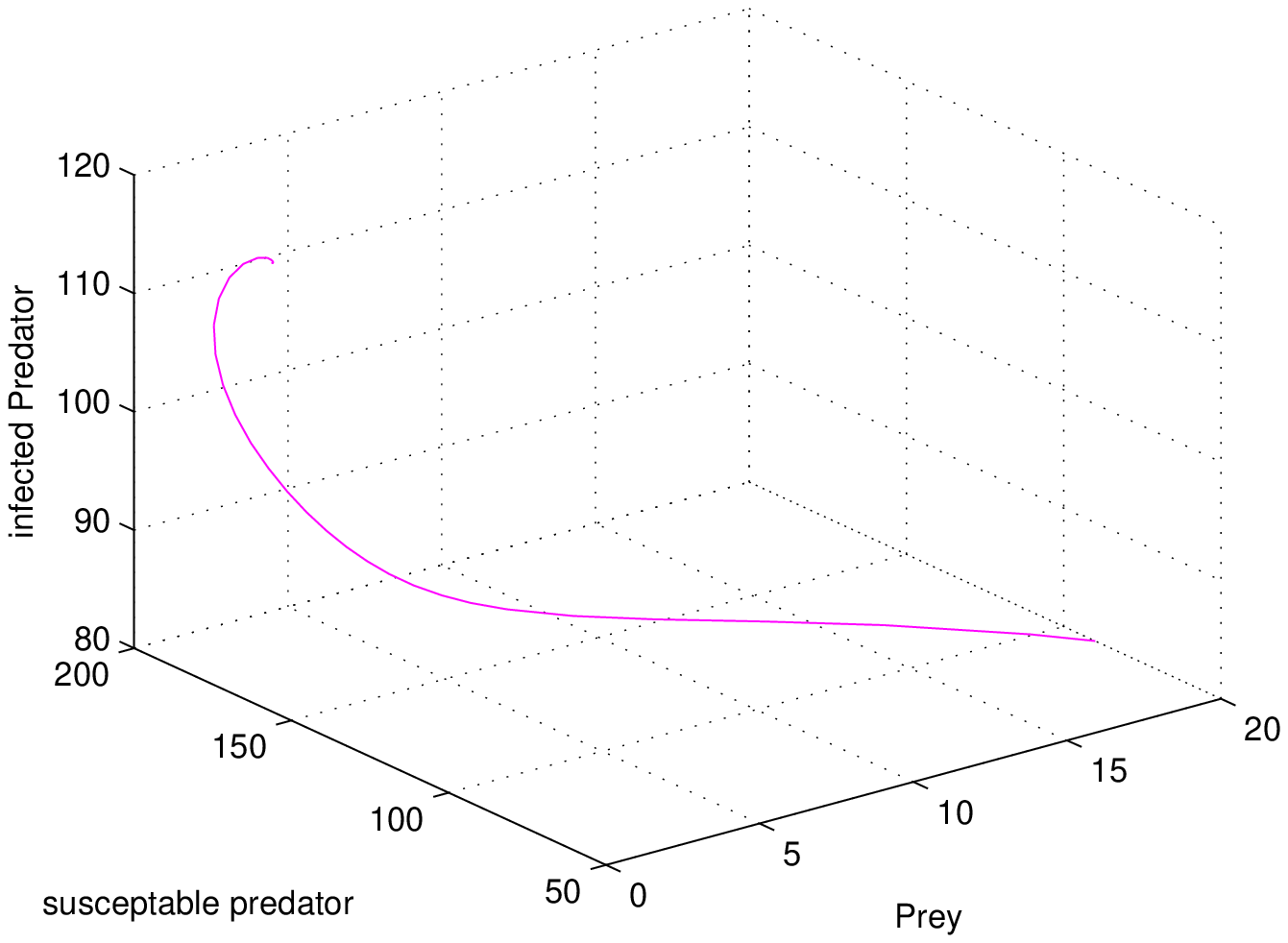}
\caption{\small{Stability behaviour of the system around the equilibrium position $E_*$ with the initial conditions $ x_0=20, \ y_0=90, \ z_0=80$ and the set of parameter values $S_3$, (a) Phase portrait, (b) Time series.}}\label{fig55} 
\end{figure}


\begin{figure}[htbp]
\centering
(a)\includegraphics[width=5.5cm,height=3.5cm]{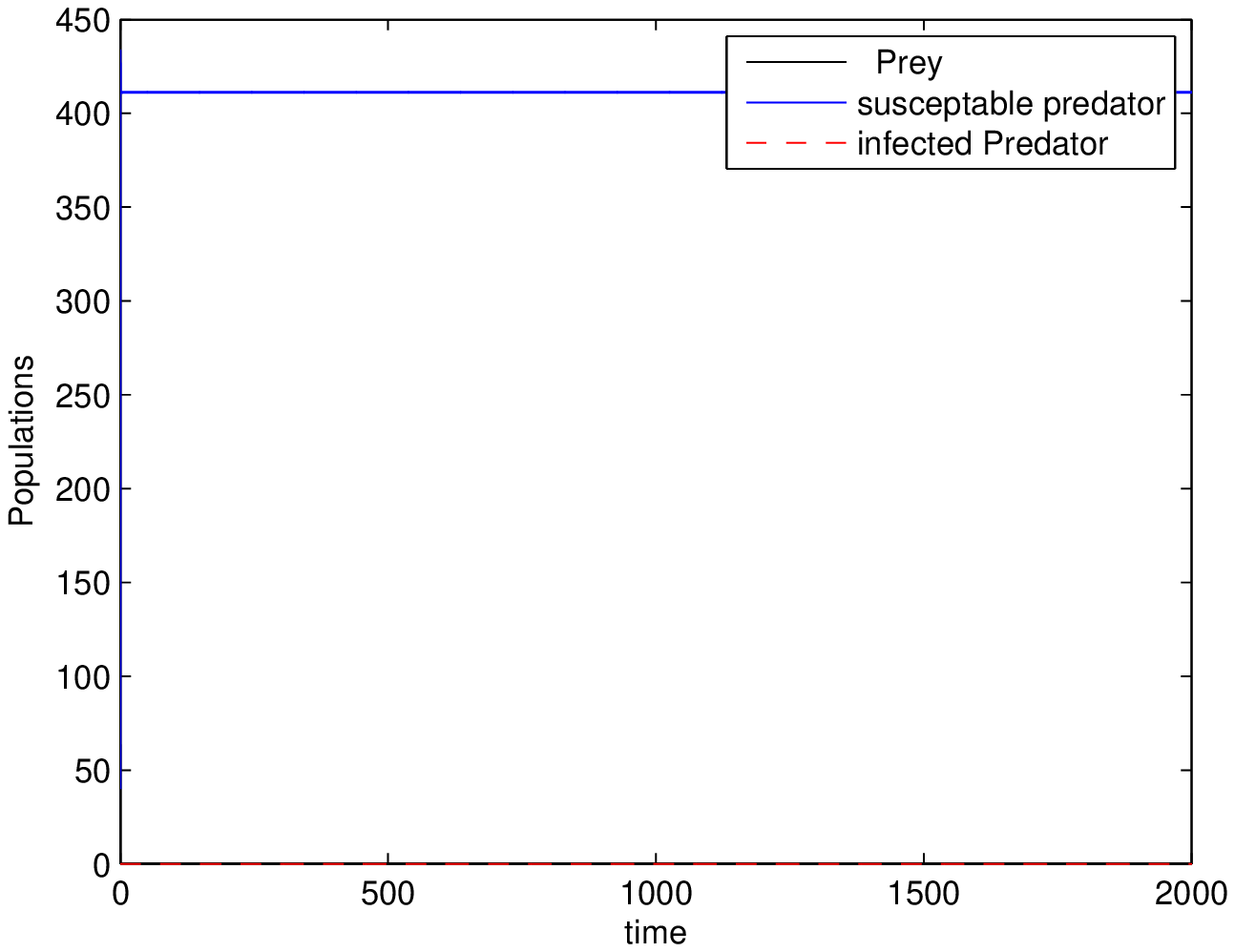}
(b)\includegraphics[width=5.5cm,height=3.5cm]{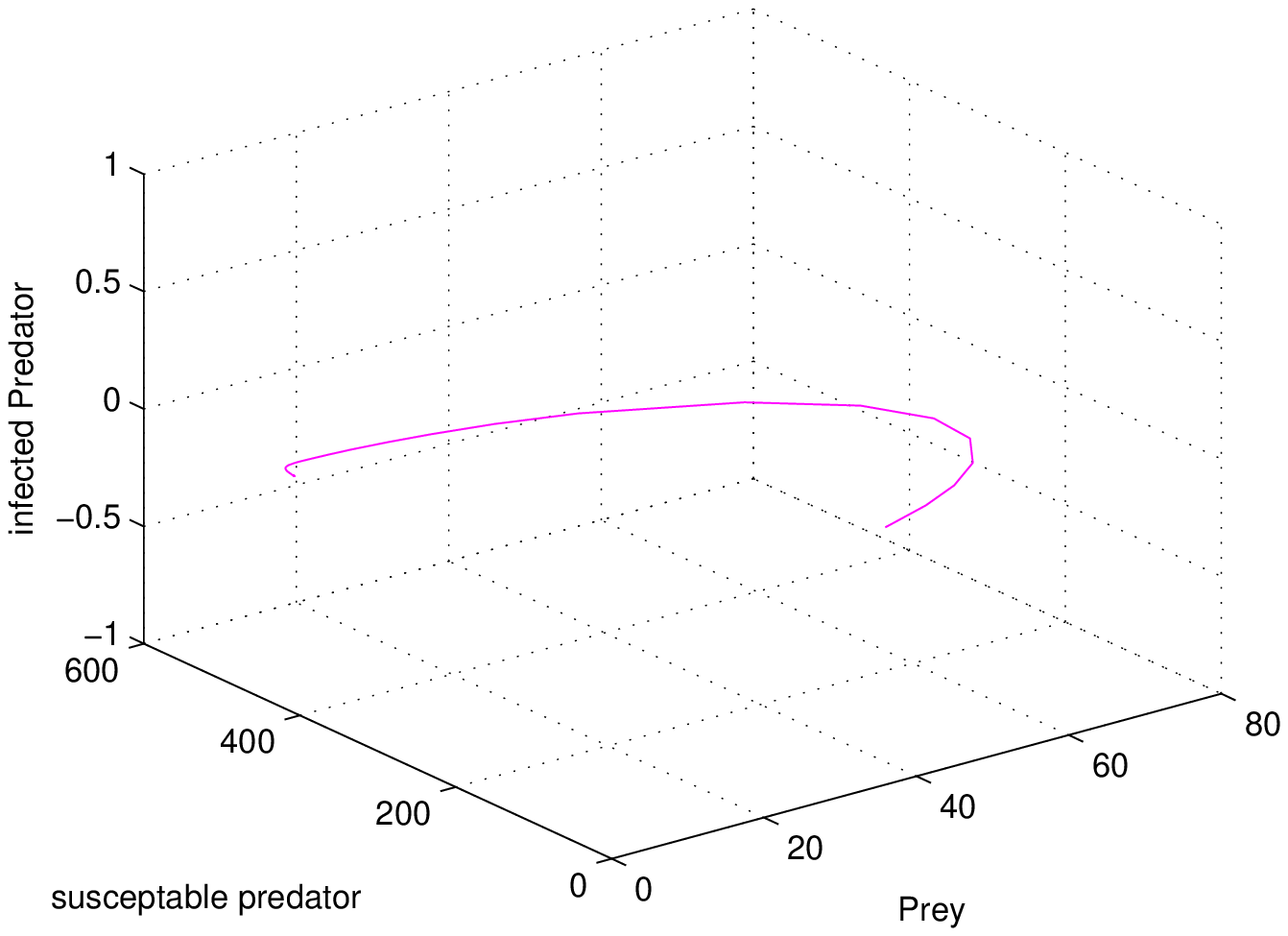}
\caption{\small{Stability behaviour of model the system around the equilibrium position $E_5$ with the initial conditions $ x_0=40, \ y_0=40, \ z_0=0$ and the set of parameter values $S_3$, (a) Phase portrait, (b) Time series. Here (i) $b_1x_5+\frac{c_1x_5y_5}{x_5^2+k_1}=5.0000>\frac{2c_1x_5^3y_5}{(x_5^2+k_1)^2}=0.02563$, 
(ii)$\frac{c_3y_5}{x_5+k_2}=12.6285 >\theta y_5+a_3=10.4237$.}} \label{fig53} 
\end{figure}

\begin{figure}[htbp]
\centering
(a)\includegraphics[width=5.5cm,height=3.5cm]{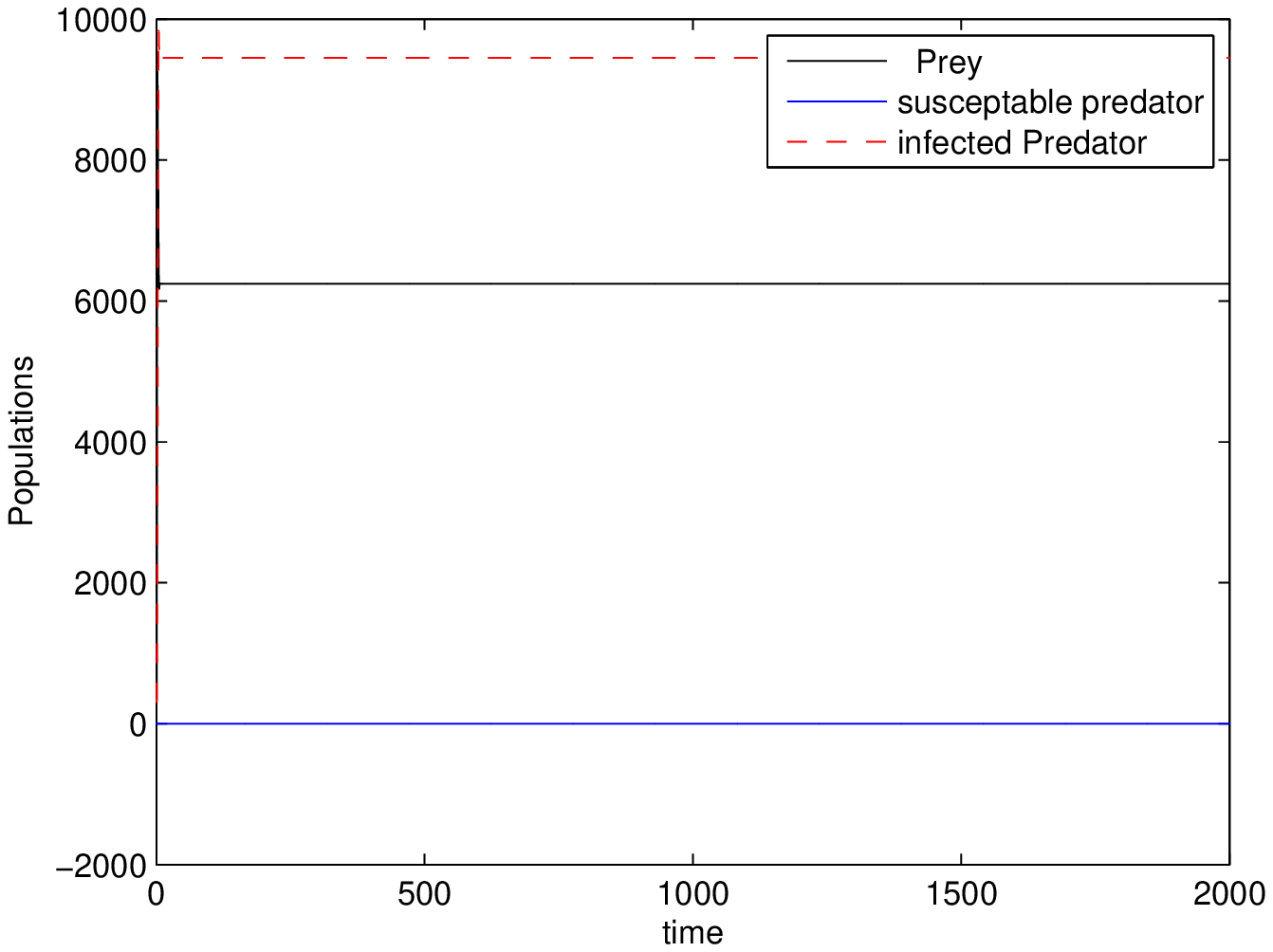}
(b)\includegraphics[width=5.5cm,height=3.5cm]{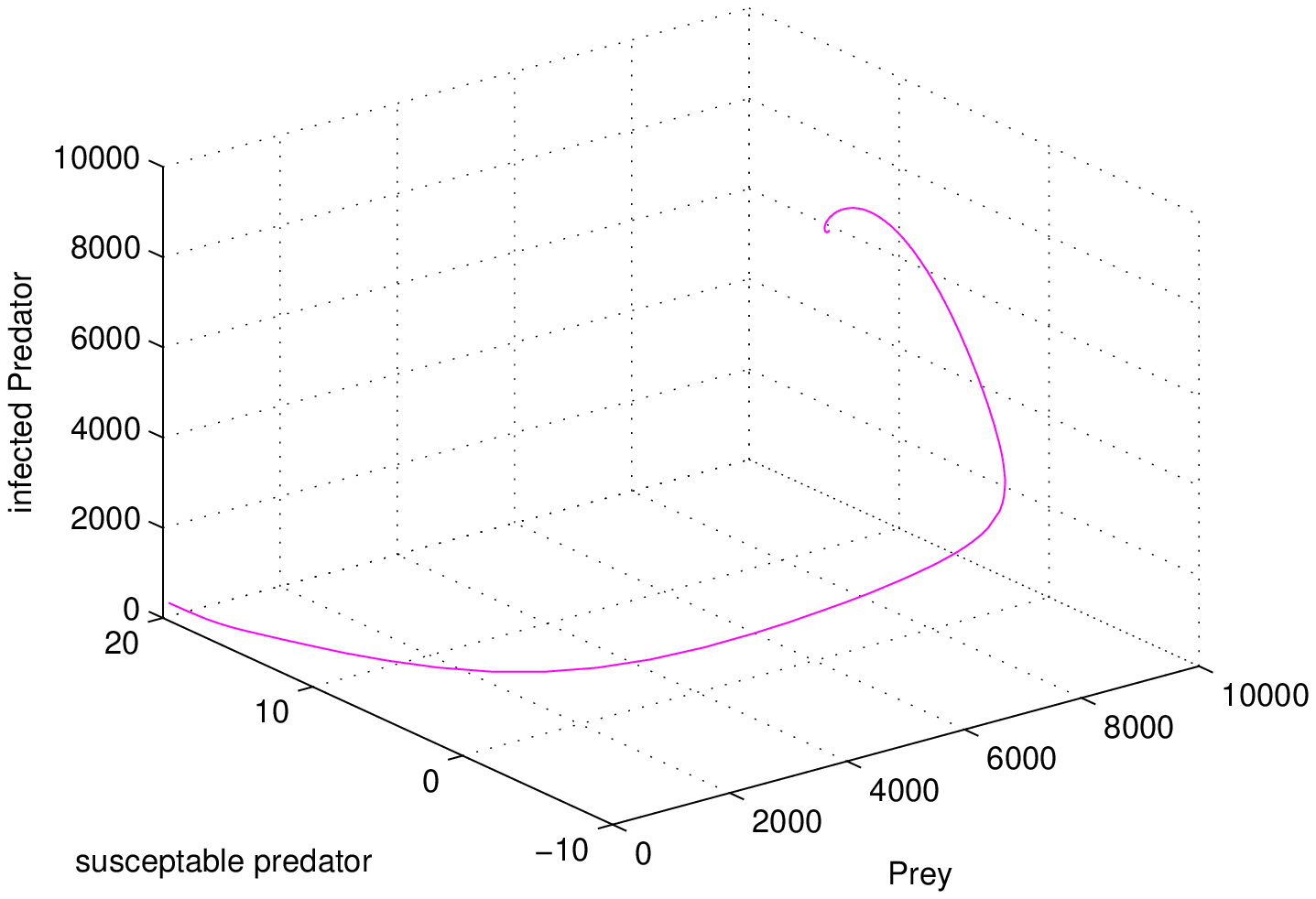}
\caption{\small{Stability behaviour of model the system around the equilibrium position $E_6$ with the initial conditions $ x_0=100, \ y_0=20, \ z_0=300$ and the set of parameter values $S_3$, (a) Phase portrait, (b) Time series. Here (i) $b_1x_6+\frac{pc_1x_6z_6}{x_6^2+k_1}= 5.0000 >\frac{2pc_1x_6^3z_6}{(x_6^2+k_1)^2}=3.7557$ , 
(ii)$\frac{c_2z_6}{x_6+k_2}+\theta z_6=206.614> a_2=7.8$.}} \label{fig54} 
\end{figure}\label{fig10}
\begin{figure}[htbp]
\centering
(a)\includegraphics[width=5.5cm,height=3.5cm]{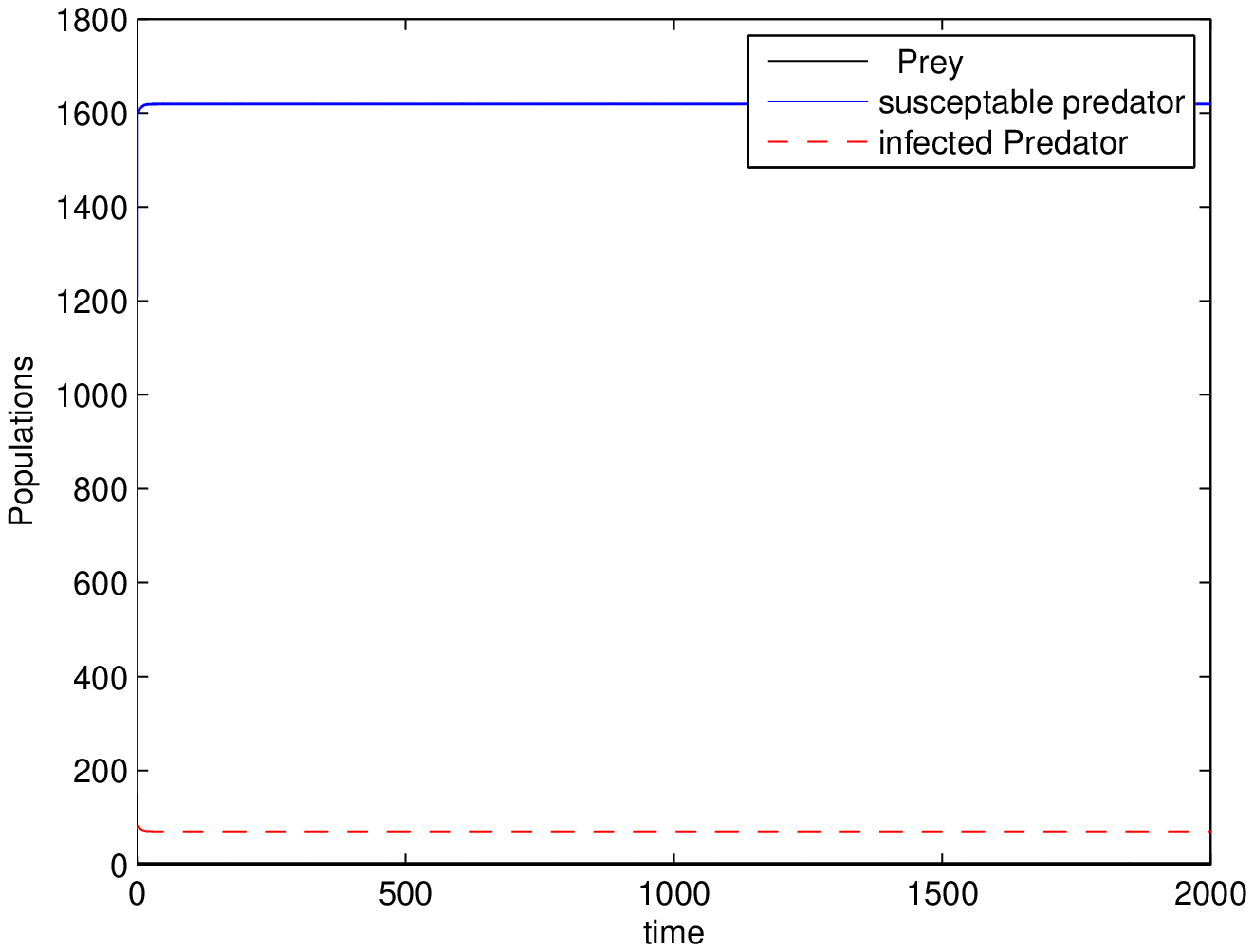}
(b)\includegraphics[width=5.5cm,height=3.5cm]{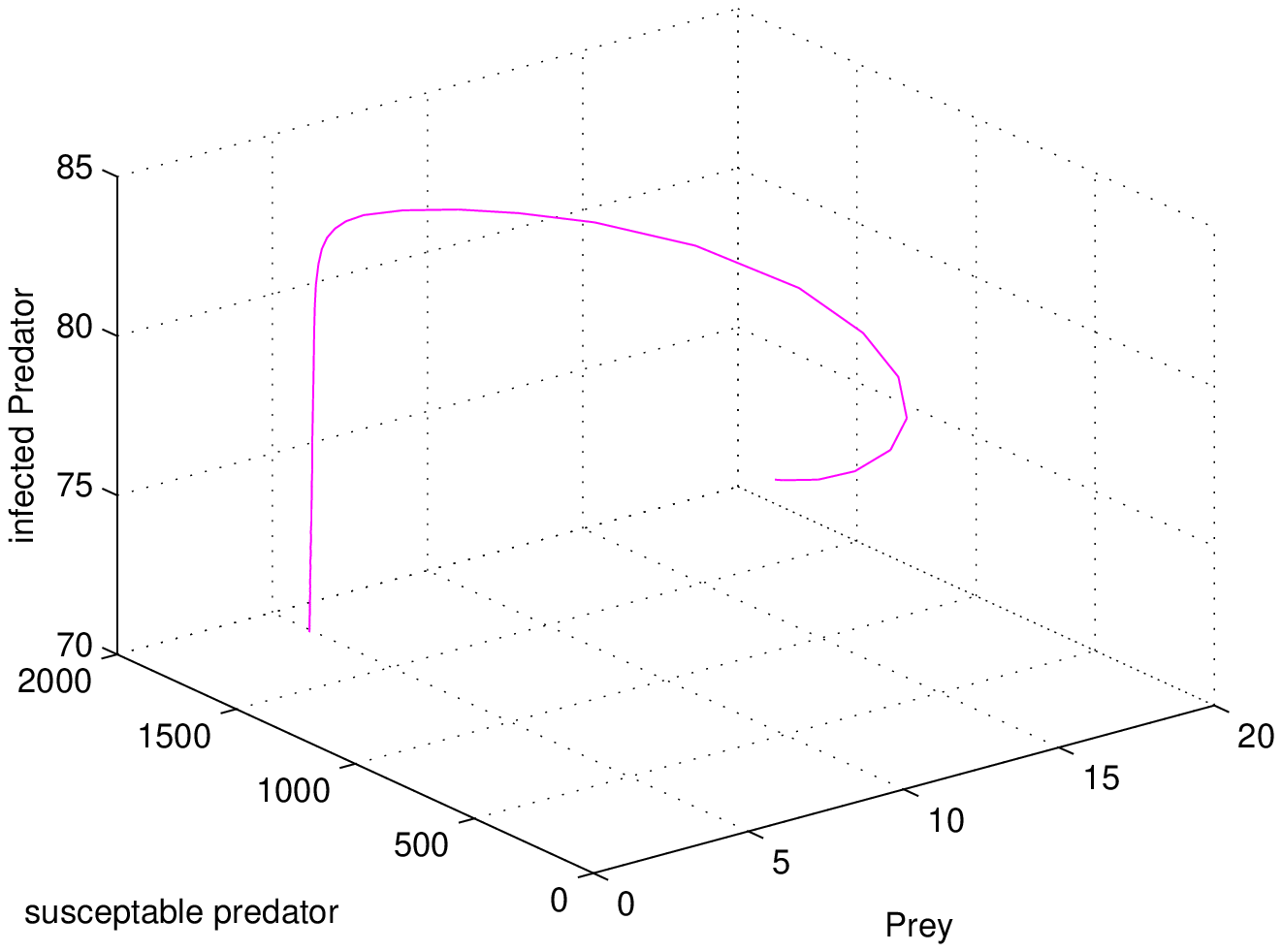}
\caption{\small{Stability behaviour of model the system around the equilibrium position $E_*^{I}$ with the initial conditions $ x_0=7, \ y_0=150, \ z_0=80$ and the set of parameter values $S_4$,  (a) Phase portrait, (b) Time series.}} \label{fig56} 
\end{figure}

\begin{figure}[htbp]
\centering
(a)\includegraphics[width=5.5cm,height=3.5cm]{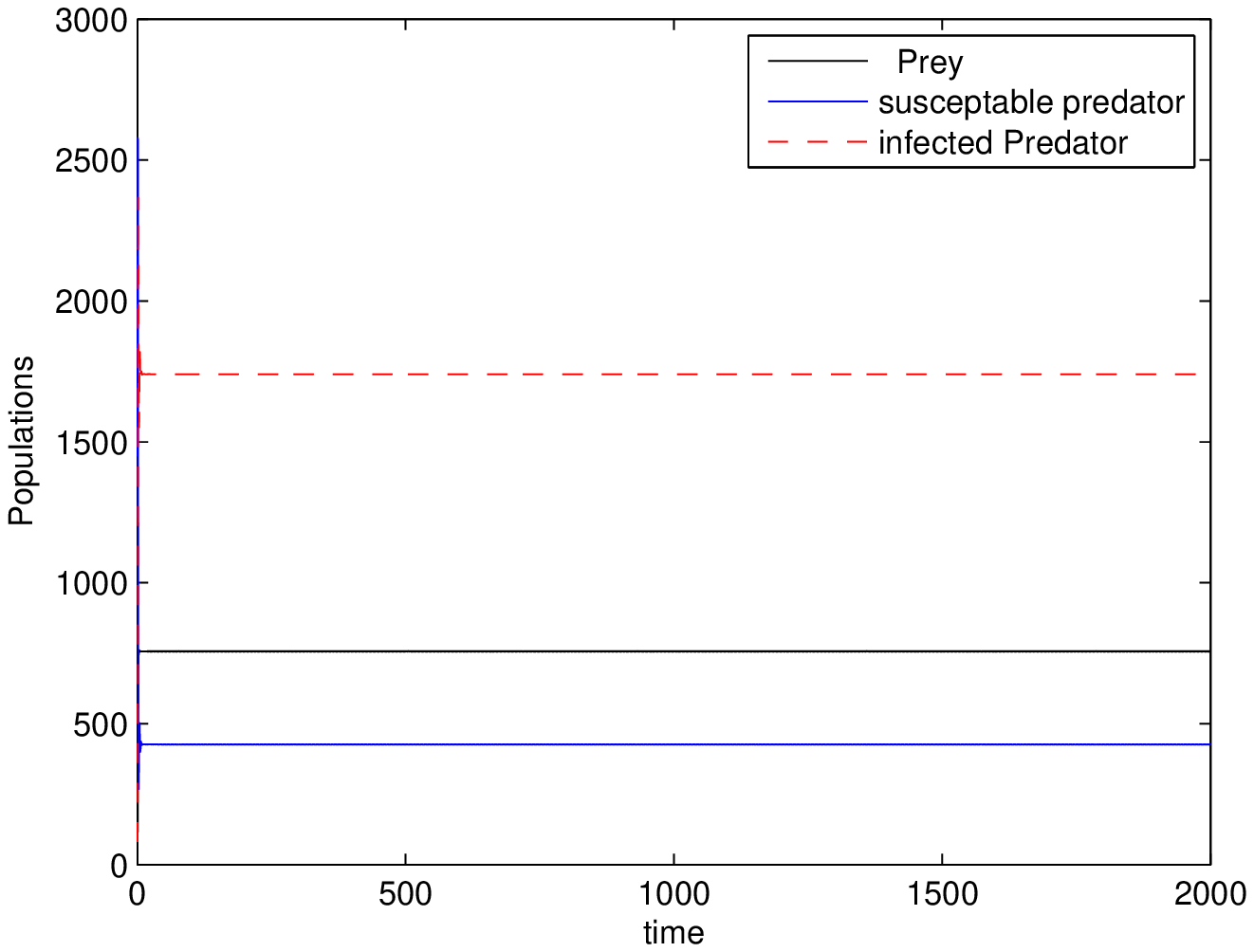}
(b)\includegraphics[width=5.5cm,height=3.5cm]{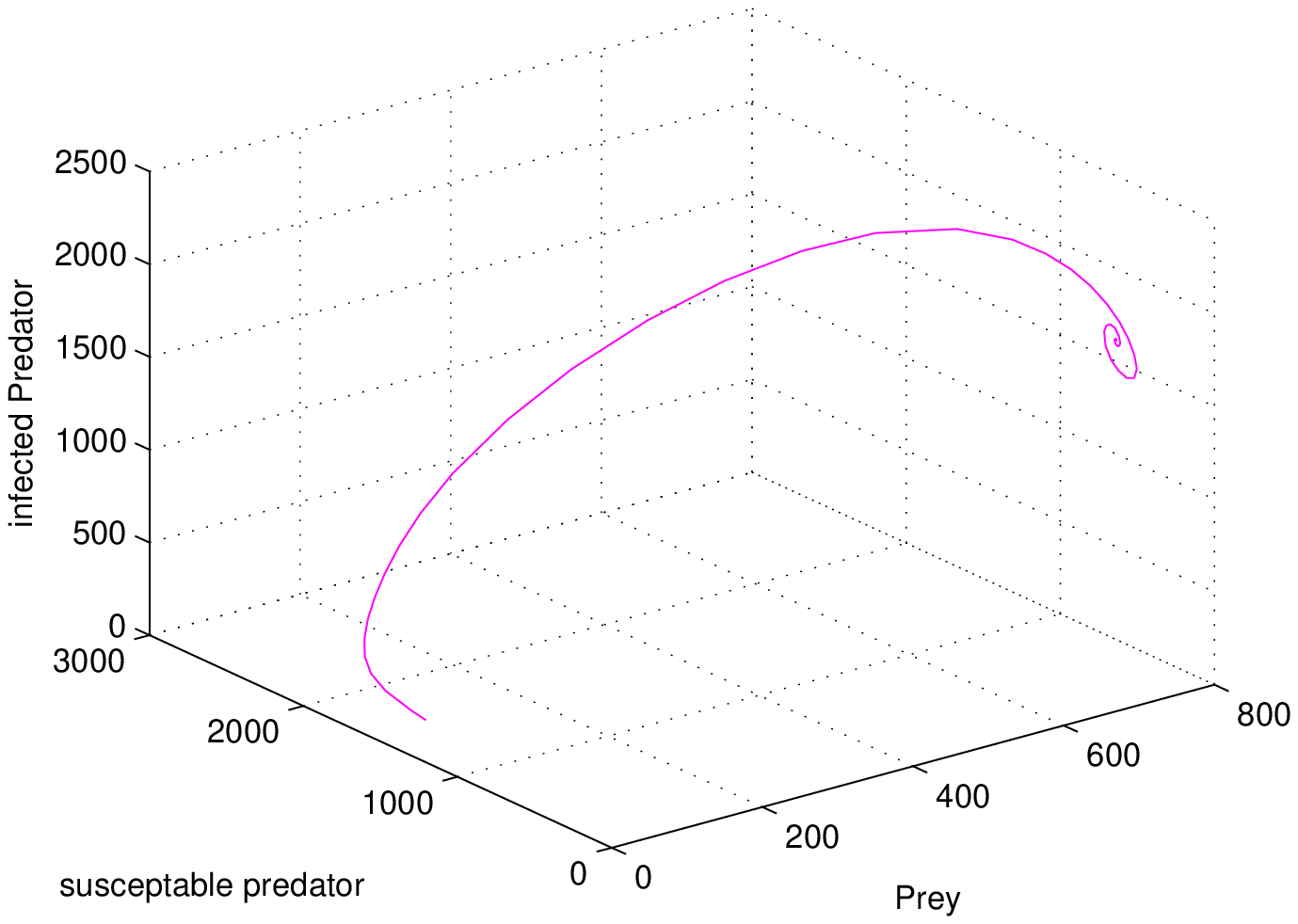}
\caption{\small{Stability behaviour of model the system around the equilibrium position $E_*^{II}$ with the initial conditions $ x_0=50, \ y_0=1450, \ z_0=80$ and the set of parameter values $S_4$,  (a) Phase portrait, (b) Time series.}} \label{fig57} 
\end{figure}

\begin{figure}[H]
\centering
(a)\includegraphics[width=5.5cm,height=3.5cm]{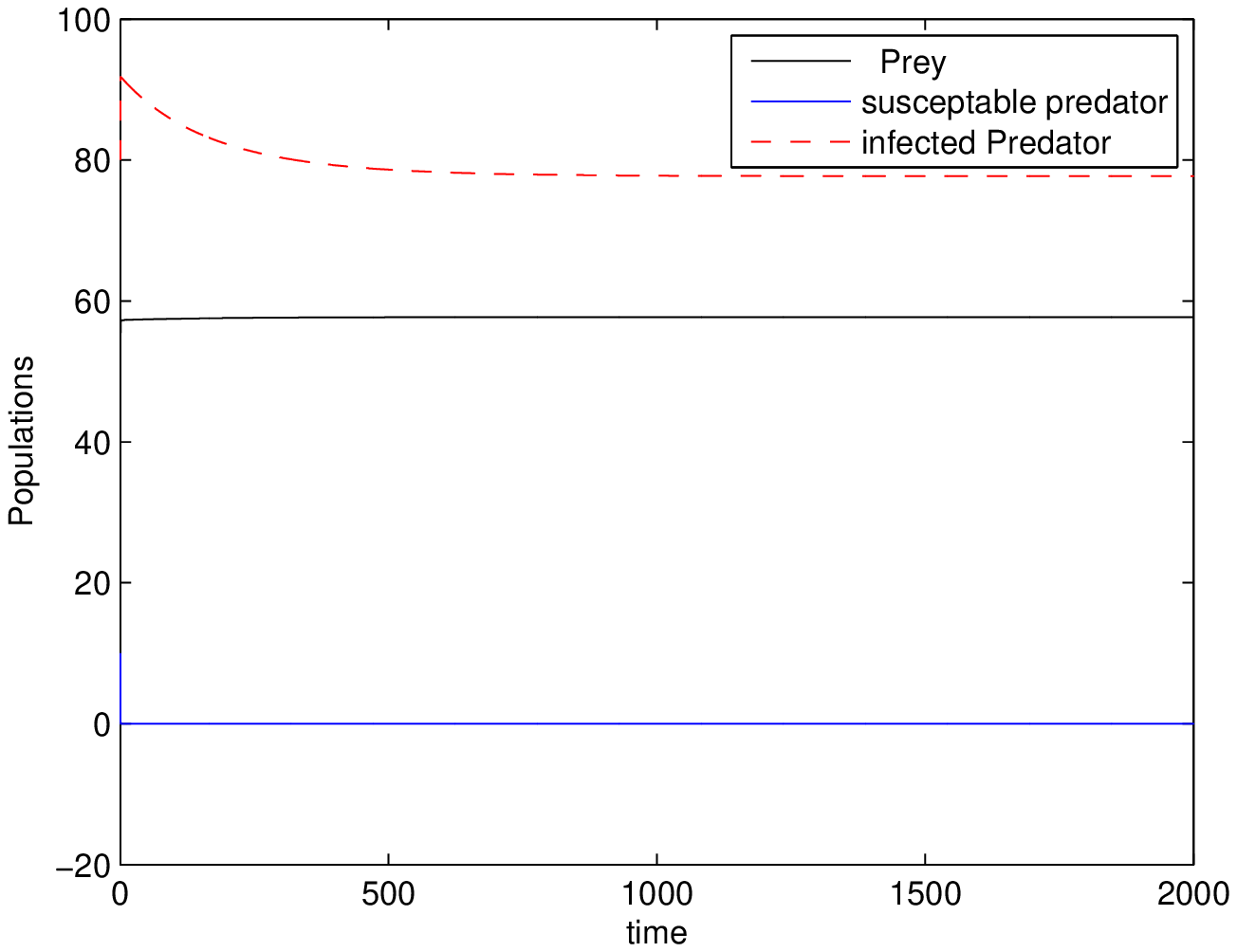}
(b)\includegraphics[width=5.5cm,height=3.5cm]{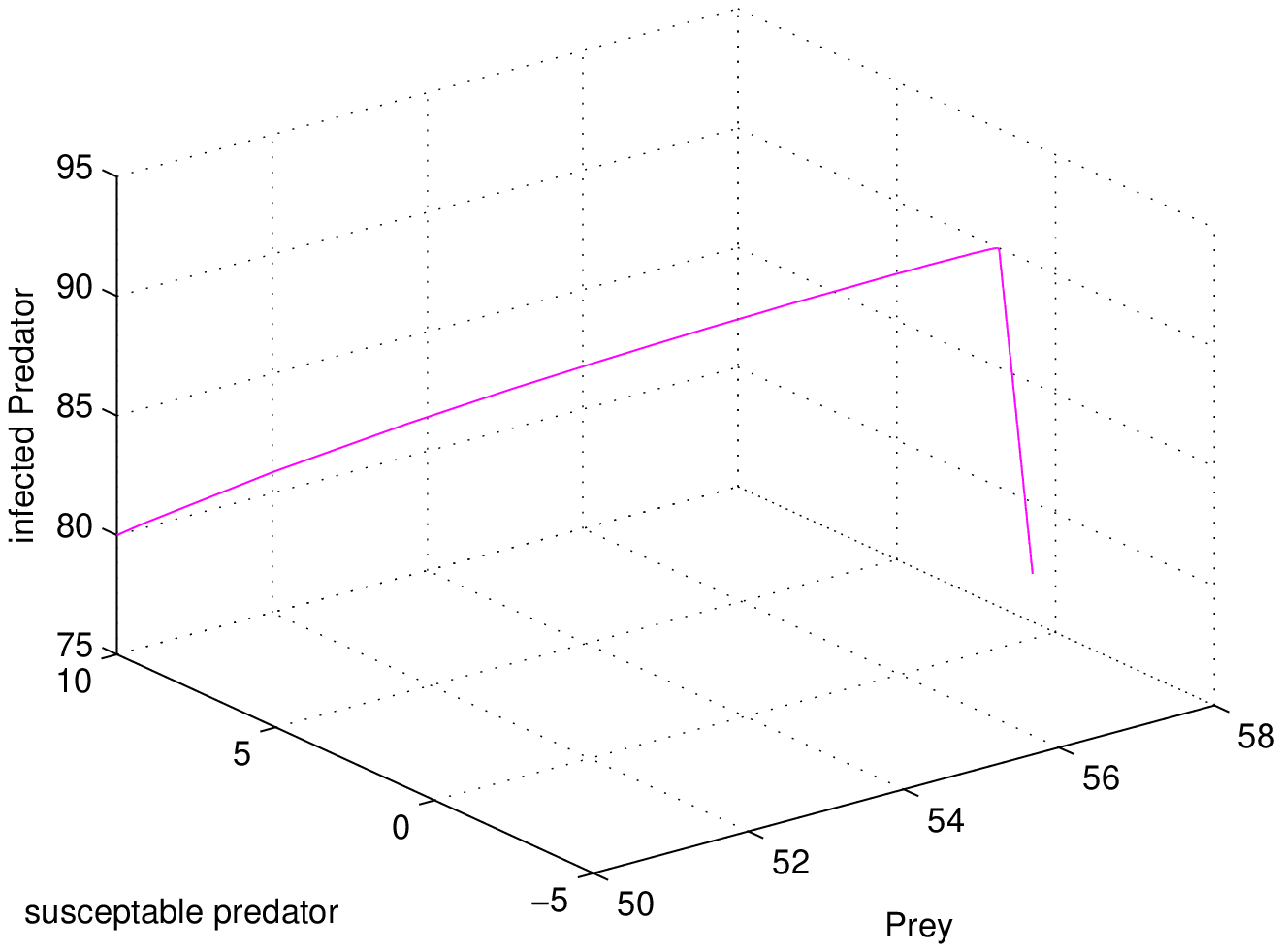}
\caption{\small{Stability behaviour of model the system around the equilibrium position $E_6$ with the initial conditions $ x_0=50, \ y_0=10, \ z_0=80$ and the set of parameter values $S_2$ , (a) Phase portrait, (b) Time series.}} 
\label{fig52}
\end{figure}


\begin{figure}[H]
\centering
(a)\includegraphics[width=5.5cm,height=3.5cm]{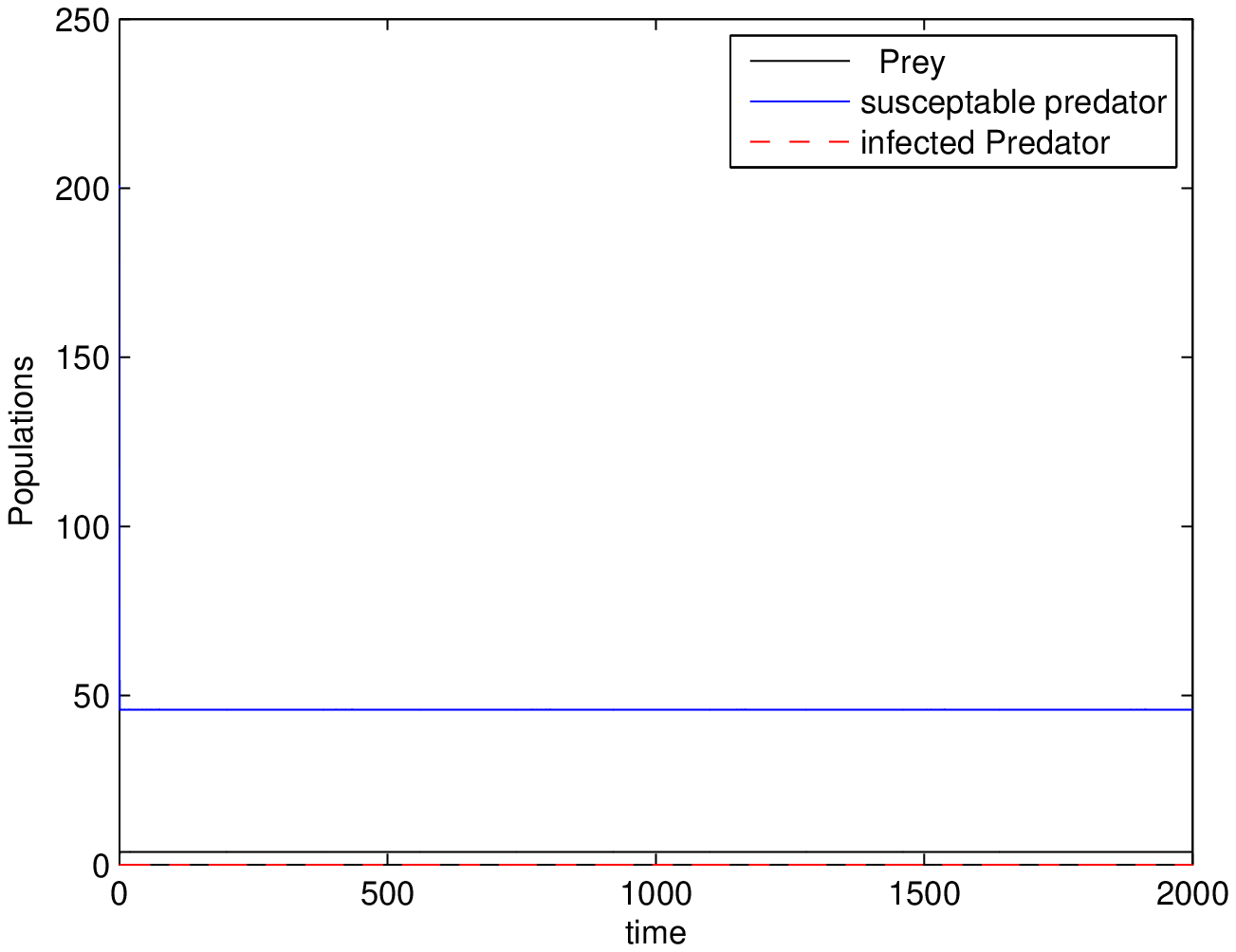}
(b)\includegraphics[width=5.5cm,height=3.5cm]{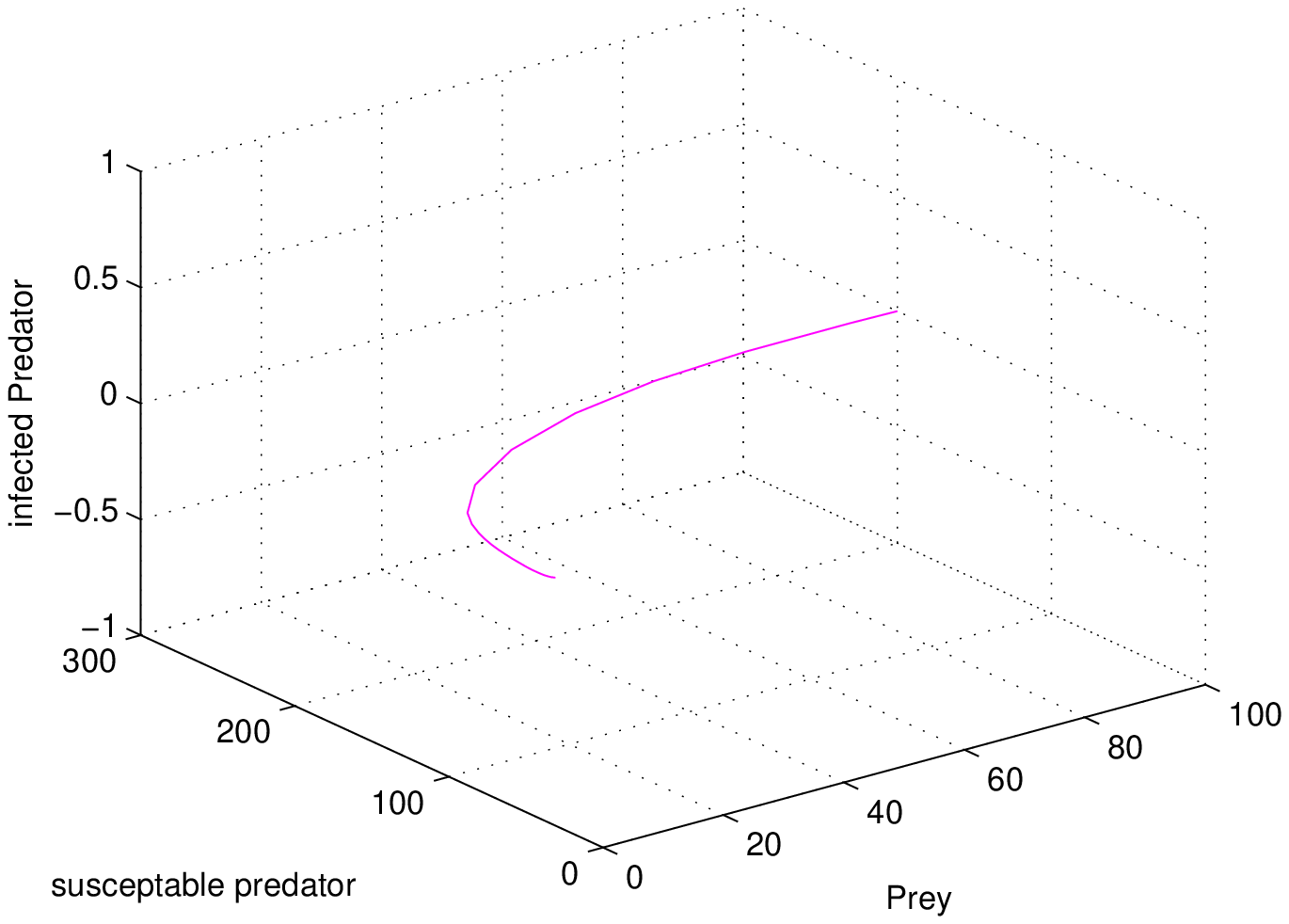}
\caption{\small{Stability behaviour of model the system around the equilibrium position $E_5$ with the initial conditions $ x_0=100, \ y_0=200, \ z_0=0 $ and the set of parameter values $S_2$,  (a) Phase portrait, (b) Time series.}} 
\label{fig51}
\end{figure}



\begin{table}[H]
\centering
\caption{Schematic representation of our analytical results for set of parameter values $S_1$}
\scalebox{0.8}{
\begin{tabular}{| l | l | l | l | l |}
\hline
\parbox[t]{0.5in}{Eq. pts.}
& \parbox[t]{1.5in}{Existence} & \parbox[t]{0.6in}{Feasibility} & \parbox[t]{0.6in}{Stability} & \parbox[t]{0.5in}{Figure}  \\ \hline 
$E_0$ & $(0, 0, 0)$ & Feasible & Unstable & ---------  \\ 
$E_1$ & $( 0, 0, 0.4102564103)$ & Feasible & Unstable & -------- \\ 
$E_2$ & $( 0, 308.6294416, 0)$ & Feasible & Unstable & ---------- \\ 
$E_3$ & $(0, 5.207637519, 35.24000437)$ & Feasible & Unstable & -------\\ 
$E_4$ & $(60, 0, 0)$ & Feasible & Unstable & --------- \\ 

$E_5^I$ & $(0.5159678886, 309.6247096, 0)$ & Feasible & --------- & --------- \\ 
$E_5^{II}$ & Does not exist & ----------- & --------- & --------- \\ 
$E_5^{III}$ & Does not exist & ---------- & ---------- & ---------\\ 
$E_6^I$ &\parbox[t]{1.5in}{$(59.98394610, 0,    0.5640614003)$} & Feasible &  Unstable & -- \\
$E_6^{II}$ & Does not exist & ------ & ------ & ----- \\ 
$E_6^{III}$ & Does not exist &-------- & ------- & -----\\ 
$E_*^I$ &\parbox[t]{1.5in}{$(56.43479200, 3.837197569,  \\36.62450011)$} & Feasible &\parbox[t]{1.5in}{Stability conditions are mentioned in section \ref{E_*}} & Figure \ref{fig50}\\ 
$E_*^{II}$ & Does not exist &-------- & -------- & ------- \\ 
$E_*^{III}$ & Does not exist & -------- & ------- &-------- \\ 
$E_*^{IV}$ & Does not exist  & -------- & ------ & -------- \\  \hline
 \end{tabular}}

\end{table}



\begin{table}[H]
\centering
\caption{Schematic representation of our analytical results for set of parameter values $S_2$}
\scalebox{0.8}{
\begin{tabular}{| l | l | l | l | l |}
\hline
\parbox[t]{0.5in}{Eq. pts.}
& \parbox[t]{1.5in}{Existence} & \parbox[t]{0.6in}{Feasibility} & \parbox[t]{0.6in}{Stability} & \parbox[t]{0.5in}{Figure}  \\ \hline
$E_0$ & $(0, 0, 0)$ & Feasible & Unstable & ---  \\
$E_1$ & $( 0, 0, 20)$ & Feasible & Unstable & ---\\ 
$E_2$ & $( 0, 38.57868020, 0)$ & Feasible & Unstable & - \\ 
$E_3$ & Does not exist & -------- & ------- & --------\\
$E_4$ & $(60, 0, 0)$ & Feasible & Unstable & ------- \\ 
$E_5^I$ & \parbox[t]{1.7in}{$(3.751381115,  45.81484682, 0)$}& Feasible & \parbox[t]{1.5in}{Stability conditions are mentioned in section \ref{E_5}} & Figure \ref{fig51}\\
$E_5^{II}$ & Does not exist & ---- & ---- & ----- \\ 
$E_5^{III}$ & Does not exist & ----- & -------- & ----\\ 
$E_6^I$ &\parbox[t]{1.7in}{$(57.70608071, 0,  77.70608071)$} & Feasible & \parbox[t]{1.5in}{Stability conditions are mentioned in section \ref{E_6}} & Figure \ref{fig52}\\
$E_6^{II}$ & Does not exist & ------ & ------ &------ \\ 
$E_6^{III}$ & Does not exist & -------- & ------ & -----\\ 
$E_*^I$ & Does not exist & -------- & -------- & ---------\\ 
$E_*^{II}$ & Does not exist & -------- & -------- & ---------\\ 
$E_*^{III}$ & Does not exist & ------ & ----- & ------\\  
$E_*^{IV}$ & Does not exist & ------ & ---- & ------ \\   \hline
\end{tabular}}
\end{table}





\begin{table}[H]
\centering
\caption{Schematic representation of our analytical results for set of parameter values $S_3$}
\scalebox{0.8}{
\begin{tabular}{| l | l | l | l | l |}
\hline
\parbox[t]{0.5in}{Eq. pts.}
& \parbox[t]{1.5in}{Existence} & \parbox[t]{0.6in}{Feasibility} & \parbox[t]{0.6in}{Stability} & \parbox[t]{0.5in}{Figure}\\
\hline
$E_0$ & $(0, 0, 0)$ & Feasible & Unstable & -----  \\ 
$E_1$ & $( 0, 0, 82.50000000)$ & Feasible & Unstable & ----- \\ 
$E_2$ & $( 0, 408.5714286, 0)$ & Feasible & Unstable & ---- \\
$E_3$ &$(0, 164.3476956, 114.3012791)$ & Feasible & Unstable & -----\\ 
$E_4$ & $(10000, 0, 0)$ & Feasible & Unstable & ---- \\ 
$E_5^I$ &\parbox[t]{1.7in} {$(0.3585095936, 411.2346427, 0)$} & Feasible & \parbox[t]{1.5in}{Stability conditions are mentioned in section \ref{E_5}}& Figure \ref{fig53} \\ 
$E_5^{II}$ & Does not exist & ------ & ------ & ----- \\ 
$E_5^{III}$ & Does not exist & ----- & ----- & ---- \\  
$E_6^I$ &\parbox[t]{1.7in}{$(6244.212048, 0,  9448.818072)$} & Feasible & \parbox[t]{1.5in}{Stability conditions are mentioned in section \ref{E_6}} & Figure \ref{fig54}\\
$E_6^{II}$ &\parbox[t]{1.7in}{$(2.657590193, 0,  86.48638529)$} & Feasible & Unstable  & ---- \\ 
$E_6^{III}$ &\parbox[t]{1.7in}{$(30.13036196, 0,  127.6955429)$} & Feasible & Unstable  & ------\\ 
$E_*^I$ &\parbox[t]{1.7in}{$(0.5990751338, 161.9321592, \\116.8375923)$} & Feasible & \parbox[t]{1.5in}{Stability conditions are mentioned in section \ref{E_*}}& Figure \ref {fig55}\\ 
$E_*^{II}$ &\parbox[t]{1.7in}{$(73.69033011,  33.00904773, \\252.2068593)$} & Feasible & Unstable & ----- \\ 
$E_*^{III}$ & Does not exist & ------- & ------ & ----\\ 
$E_*^{IV}$ & Does not exist & ----- & ----- & ----\\ \hline
\end{tabular}}
\end{table}




\begin{table}[H]
\centering
\caption{Schematic representation of our analytical results for set of parameter values $S_4$}
\scalebox{0.8}{
\begin{tabular}{| l | l | l | l | l |}
\hline
\parbox[t]{0.5in}{Eq. pts.}
& \parbox[t]{1.5in}{Existence} & \parbox[t]{0.6in}{Feasibility} & \parbox[t]{0.6in}{Stability} & \parbox[t]{0.5in}{Figure}  \\ \hline
$E_0$ & $(0, 0, 0)$ & Feasible & Unstable & -----  \\ 
$E_1$ & $( 0, 0, 20)$ & Feasible & Unstable & -------- \\ 
$E_2$ & $( 0, 1714.285714, 0)$ & Feasible & Unstable & ------- \\ 
$E_3$ &$(0, 1637.974544, 44.24202326)$ & Feasible & Unstable& -----\\ 
$E_4$ & $(800, 0, 0)$& Feasible & Unstable & ----- \\ 
$E_5^I$ & $(3.142771393, 1741.223755, 0)$ & Feasible & Unstable  & -----\\ 
$E_5^{II}$ & $(41.15227929, 2067.019537, 0)$ & Feasible & Unstable & ---- \\ 
$E_5^{III}$ & $(618.5620922, 7016.246504, 0)$ & Feasible & Unstable & ------\\ 
$E_6^I$ & $(798.1394249, 0, 99.81394249)$ & Feasible & Unstable &-------\\
$E_6^{II}$ & Does not exist & ---- & ------ & ------- \\ 
$E_6^{III}$ & Does not exist & ------- & ----- & ---------\\ 
$E_*^I$ &\parbox[t]{1.7in}{$(3.271695068,1618.749669,\\71.15684924)$} & Feasible &\parbox[t]{1.5in}{Stability conditions are mentioned in section \ref{E_*}}& Figure \ref{fig56}\\ 
$E_*^{II}$ &\parbox[t]{1.7in}{$(756.7723630,426.6171136,\\1740.142426)$} & Feasible & \parbox[t]{1.5in}{Stability conditions are mentioned in section \ref{E_*}} & Figure \ref{fig57}\\ 
$E_*^{III}$ & \parbox[t]{1.7in}{$(33.12282349,1461.965361,\\290.6548798)$} & Feasible & Unstable & -------\\ 
$E_*^{IV}$ & Does not exist & ---- & ------ & ------- \\  \hline
\end{tabular}}
\end{table}


\section{Conclusions and comments}
In this paper, we have proposed and analyzed an  eco-epidemiological model where only the predator population is infected by an infectious disease. Here we have considered a modified Leslie-Gower and Holling type-III predator-prey model. We have divided the predator population into two sub classes: susceptible and infected. Then we study the behaviour of the system at various equilibrium points and their stability. The conditions for existence and stability of all the equilibria of the system have been given. The system (\ref{model-1}) has eight equilibrium points: one trivial equilibrium $E_0$, three axial equilibrium points $E_1$, $E_2$, $E_4$, three planar equilibrium points $E_3$, $E_5$, $E_6$ and one coexistence equilibrium $E_*$. For our model: $E_i$, $i=0,1,2,4$ exist and are unstable for all times. $E_3$ exists if $a_3k_2\theta < a_2c_3-a_3c_2< a_2k_2\theta$ and $c_2>c_3$ but unstable. The equilibrium point $E_5$ is locally asymptotically stable if $b_1x_5+\frac{c_1x_5y_5}{x_5^2+k_1}>\frac{2c_1x_5^3y_5}{(x_5^2+k_1)^2}$, $\frac{c_3y_5}{x_5+k_2}>\theta y_5+a_3$. Also  $E_6$ is locally asymptotically stable if $b_1x_6+\frac{pc_1x_6z_6}{x_6^2+k_1}>\frac{2pc_1x_6^3z_6}{(x_6^2+k_1)^2}$, $\frac{c_2z_6}{x_6+k_2}+\theta z_6>a_2$. The coexistence equilibrium point $E_*$ is locally as well as globally asymptotically stable under some conditions. Persistence of the system is also shown.\\
\indent At last, we conclude that our eco-epidemic predator--prey model with infected predator exhibits very interesting dynamics. Here we have assumed Holling type III response mechanism for predation.  So, we can refine the model considering other type of functional response. We can also consider the disease infection in the prey population, which can give us a very rich dynamics. There must be some time lag, called gestation delay. So, as part of future work to improve the model we can incorporate the gestation delay in our model to make it more realistic.


\end{document}